\crefname{hypothesis}{Hypothesis}{Hypotheses}
\newcommand*{\addFileDependency}[1]{
  \typeout{(#1)}
  \@addtofilelist{#1}
  \IfFileExists{#1}{}{\typeout{No file #1.}}
}
\newcommand*{\myexternaldocument}[1]{%
    \externaldocument{#1}%
    \addFileDependency{#1.tex}%
    \addFileDependency{#1.aux}%
}
\newcommand{\norm}[1]{\left\lVert#1\right\rVert}
\newcommand{\parens}[1]{\left( #1 \right)}
\newcommand{\brackets}[1]{\left[ #1 \right]}
\newcommand{\evalbar}[2]{\left. #1 \right|_{#2}}
\newcommand{\Rbb}{\mathbb{R}}
\newcommand{\Zbb}{\mathbb{Z}}
\newcommand{\Abf}{\mathbf{A}}
\newcommand{\bbf}{\mathbf{b}}
\newcommand{\Bbf}{\mathbf{B}}
\newcommand{\Cbf}{\mathbf{C}}
\newcommand{\dbf}{\mathbf{d}}
\newcommand{\ebf}{\mathbf{e}}
\newcommand{\Ebf}{\mathbf{E}}
\newcommand{\Fbf}{\mathbf{F}}
\newcommand{\Gbf}{\mathbf{G}}
\newcommand{\hbf}{\mathbf{h}}
\newcommand{\Ibf}{\mathbf{I}}
\newcommand{\Kbf}{\mathbf{K}}
\newcommand{\Lbf}{\mathbf{L}}
\newcommand{\nbf}{\mathbf{n}}
\newcommand{\Pbf}{\mathbf{P}}
\newcommand{\Qbf}{\mathbf{Q}}
\newcommand{\rbf}{\mathbf{r}}
\newcommand{\ubf}{\mathbf{u}}
\newcommand{\Ubf}{\mathbf{U}}
\newcommand{\Vbf}{\mathbf{V}}
\newcommand{\wbf}{\mathbf{w}}
\newcommand{\Wbf}{\mathbf{W}}
\newcommand{\dt}{\Delta t}
\newcommand{\Thetabf}{\mathbf{\Theta}}
\newcommand{\Ubfstar}{\mathbf{U}^{\star}}
\newcommand{\Thetabfstar}{\mathbf{\Theta}^{\star}}
\newcommand{\Gbfstar}{\mathbf{G}^{\star}}
\newcommand{\Bbfstar}{\mathbf{B}^{\star}}
\newcommand{\Wbfstar}{\mathbf{W}^{\star}}
\newcommand{\ustar}{u^{\star}}
\newcommand{\Phibf}{\mathbf{\Phi}}
\newcommand{\Phidotbf}{\mathbf{\dot{\Phi}}}
\newcommand{\Psihatbf}{\mathbf{\widehat{\Psi}}}
\newcommand{\vecPhi}{\boldsymbol{\phi}}
\newcommand{\vecPhidot}{\boldsymbol{
\dot{\phi}}}
\newcommand{\mubf}{\boldsymbol{\mu}}
\newcommand{\phidot}{\dot{\phi}}
\newcommand{\flr}[1]{{\lfloor {#1} \rfloor}}
\newcommand{\Mtilde}{\widetilde{M}}
\newcommand{\rchat}{\widehat{r}_c}
\newcommand{\erm}{\mathrm{e}}
\newcommand{\rmin}{r_{\text{min}}}
\newcommand{\ermhat}{\widehat{\mathrm{e}}}
\newcommand{\ebfint}{\mathbf{e}_{\text{int}}}
\newcommand{\ebfhatint}{\mathbf{\widehat{e}}_{\text{int}}}
\newcommand{\what}{\widehat{\mathbf{w}}}
\newcommand{\wstar}{\mathbf{w}^{\star}}
\newcommand{\Kcalbf}{\boldsymbol{\mathcal{K}}}
\title{Weak Form Scientific Machine Learning: Test Function Construction for System Identification\thanks{Submitted to the editors DATE.
\funding{This work is supported in part by 
Department of Energy grant DE-SC0023346 and in part by National Institute of General Medical Sciences grant R35GM149335.}}}
\author{April Tran\thanks{Department of Applied Mathematics, University of Colorado, Boulder, CO 80309-0526
(\email{chi.tran@colorado.edu}, \email{david.bortz@colorado.edu}).}
\and David M.~Bortz\footnotemark[2]}
\begin{document}

\maketitle

\begin{abstract}
Weak form Scientific Machine Learning (WSciML) is a recently developed framework for data-driven modeling and scientific discovery.  It leverages the weak form of equation error residuals to provide enhanced noise robustness in system identification via convolving model equations with test functions, reformulating the problem to avoid direct differentiation of data. The performance, however, relies on wisely choosing a set of compactly supported test functions. 

In this work, we mathematically motivate a novel data-driven method for constructing \emph{Single-scale-Local} reference functions for creating the set of test functions.  Our approach numerically approximates the integration error introduced by the quadrature and identifies the support size for which the error is minimal, without requiring access to the model parameter values. Through numerical experiments across various models, noise levels, and temporal resolutions, we demonstrate that the selected supports consistently align with regions of minimal parameter estimation error. We also compare the proposed method against the strategy for constructing \emph{Multi-scale-Global} (and orthogonal) test functions introduced in our prior work, demonstrating the improved computational efficiency. 
\end{abstract}

\begin{keywords}
  Weak Form, Test Function, WENDy, WSINDy
\end{keywords}

\begin{AMS}
  46F05, 62FXX, 62JXX, 65L09, 68Q32
\end{AMS}

\section{Introduction}
\label{sec:intro}
Weak form Scientific Machine Learning (WSciML) is a recently developed framework for data-driven modeling and scientific discovery.  Historically, the weak form of a model equation has been used as a tool to mathematically and computationally study solutions to the equation. Only recently has it been used to solve inverse problems in scientific machine learning.

Towards the goal of improving the performance of WSciML equation learning and parameter estimation, we propose a novel data-driven method for constructing \emph{Single-scale-Local (SL)} test functions. Our approach is based on numerically estimating the integration error introduced by the quadrature. We derive mathematical justifications for criteria to determine, e.g., the support size for which the error is minimal. This is done based entirely on the measured data and without requiring access to the model parameter values. Through numerical experiments across various models, noise levels, and temporal resolutions, we demonstrate that the chosen hyperparameters consistently align with regions of minimal parameter estimation error. We also compare the proposed method against the strategy for constructing \emph{Multi-scale Global (MG)} (and orthogonal) test functions introduced in our prior work, demonstrating the improved computational efficiency. 

\subsection{Problem Setup and Background}
To demonstrate the effectiveness for our approach, we consider the problem of parameter estimation for systems governed by ordinary differential equations (ODEs), where the dynamics take the form
\begin{equation}
\begin{array}{rl}
\dot{u}&=\sum_{j=1}^{J}w_j f_j(u),\label{eq:UPE_DE}\\
u(t_0)&=u_0\in\mathbb{R}^{d},
\end{array}
\end{equation}
with $u:\mathbb{R}\to\mathbb{R}^d$ and $\wbf\in \mathbb{R}^{J}$ denoting an unknown parameter vector. Given time series data $\mathbf{U}\in\mathbb{R}^{(M+1)\times d}$ sampled at discrete time points $t:=\{t_i\}_{i=0}^{M}$, our goal is to estimate $\wbf$. 
Throughout this work, we focus on systems where the right-hand side is a linear combination of known functions $f_j$ (which themselves can be nonlinear). 

Historically, two main classes of methods have been used to solve this parameter estimation problem: Output Error (OE) and Equation Error (EE) approaches \cite{Ljung1999, Ljung2017WileyEncyclopediaofElectricalandElectronicsEngineering}. OE methods propose a candidate parameter vector, numerically solve the ODE, compare (typically via a least squares loss function) the resulting solution to the observed data, and iteratively update the parameters (and the numerical solution) to reduce the loss. This approach, however, introduces several challenges, including unclear impact of solver accuracy, optimization stopping criteria, and general sensitivity to algorithmic hyperparameters \cite{NardiniBortz2019InverseProbl}. 


In contrast, EE methods completely bypass the need for a simulation by substituting noisy observations directly into the model and minimizing the residual of the differential equation. This approach (regression using the differential equation itself) can be traced back to early work in the 1950s and 1960s, particularly in the aerospace literature \cite{Greenberg1951NACATN2340, Lion1967AIAAJournal}, where rapid system identification is crucial for flight control applications. 
More recently, EE-based methods have received renewed attention due to the development of the Sparse Identification of Nonlinear Dynamics (SINDy) framework \cite{BruntonProctorKutz2016ProcNatlAcadSci}, which simultaneously infers model structure and parameter values by identifying a sparse set of active terms from a predefined library of candidate functions. 

To address the challenge of approximating derivatives of (potentially noisy) data in EE methods, Shinbrot was the first to propose integrating the model equations, weighted by compactly supported functions (and applying integration-by-parts) \cite{Shinbrot1954NACATN3288,Shinbrot1957TransAmSocMechEng}. This reformulates the system identification into an algebraic problem. When the model is linear in the parameters, this yields a standard linear regression problem. Later rediscovered and popularized as the Modulating Function Method (MFM) by Loeb and Cahen \cite{LoebCahen1963Automatisme, LoebCahen1965IEEETransAutomControl}, their work led to a wide range of modulating functions. These classes of functions include ones bulit on Fourier-type bases \cite{Shinbrot1957TransAmSocMechEng, PearsonLee1985IEEETransAutomatContr,PearsonLee1985Control-TheoryAdvTechnol,PerdreauvilleGoodson1966JBasicEng}, Hermite interpolating polynomials \cite{Takaya1968IEEETransAutomControl}, Hartley transforms \cite{PatraUnbehauen1995IntJControl}, piecewise polynomials \cite{LiuLaleg-KiratiGibaruEtAl20132013AmControlConf}, and splines \cite{Maletinsky1975IFACProceedingsVolumes,Maletinsky1979IFACProceedingsVolumes}. We note, however, that these previous test functions are defined over an entire domain with the exception of the splines \cite{Maletinsky1975IFACProceedingsVolumes,Maletinsky1979IFACProceedingsVolumes}, which are defined over longer and longer domains. 
For a comprehensive overview of modulating function development and applications, we refer the reader to \cite{PreisigRippin1993ComputChemEng, PreisigRippin1993ComputChemEnga, PreisigRippin1993ComputChemEngb, Pearson19954654NASAContractorReport, Beier2023}.

Following the introduction of the SINDy framework \cite{BruntonProctorKutz2016ProcNatlAcadSci}, several groups (including ours) independently developed weak formulation-based versions of SINDy \cite{PantazisTsamardinos2019Bioinformatics,WangHuanGarikipati2019ComputMethodsApplMechEng,MessengerBortz2021JComputPhys,MessengerBortz2021MultiscaleModelSimul,GurevichReinboldGrigoriev2019Chaos}.\footnote{We also note that these developments occurred independently from the modulating function literature as well.} In particular, the Weak form Sparse Identification of Nonlinear Dynamics (WSINDy) framework \cite{MessengerBortz2021JComputPhys,MessengerBortz2021MultiscaleModelSimul} uses integration by parts to shift derivatives onto compactly supported \emph{test functions}.\footnote{These \emph{test functions} are so named because of their similarity to the ones used in the Finite Element Method.} This framework has since been applied to a range of settings, including ODEs \cite{MessengerBortz2021MultiscaleModelSimul, Heitzman-BreenDukicBortz2025arXiv250617373}, PDEs \cite{MessengerBortz2021JComputPhys,VaseyMessengerBortzEtAl2025JComputPhys, MinorMessengerDukicEtAl2025arXiv250100738, LyonsDukicBortz2025arXiv250624101}, interacting particle systems \cite{MessengerBortz2022PhysicaD, MessengerWheelerLiuEtAl2022JRSocInterface}, and reduced order modeling \cite{TranHeMessengerEtAl2024ComputMethodsApplMechEng,MessengerBurbyBortz2024SciRep,HeTranBortzEtAl2025IntJNumerMethodsEng}. A broader overview of theoretical developments and applications of our weak form framework appears in \cite{MessengerTranDukicEtAl2024SIAMNews} and an extensive technical introduction in \cite{BortzMessengerTran2024NumericalAnalysisMeetsMachineLearning}.

While WSINDy is effective at learning equations, the regression is a so-called Errors-in-Variables (EiV) problem, which is well-known in the statistics literature \cite{Fuller1987,Buonaccorsi2010}. It is known that measurement noise in EiV problems leads to a bias in the parameter estimates \cite{Regalia1994IEEETransSignalProcess}. Accordingly, to directly address this bias, we considered the simplified problem of \emph{assuming} an existing model and developed the Weak form Estimation of Nonlinear Dynamics (WENDy) method \cite{BortzMessengerDukic2023BullMathBiol} to estimate the parameters. Our approach diverged from the modulating function method in that we both proposed a mathematically-motivated strategy for constructing the test functions as well as a method to correct for the covariance.

However, despite the wide variety of test (modulating) functions proposed in the literature, outside of efforts in our group, there has been little mathematical or empirical guidance for choosing hyperparameters (such as support size or smoothness), particularly in a way that optimizes parameter recovery. In \cite{MessengerBortz2021MultiscaleModelSimul}, we showed that the regularity of test functions directly affects the discretization error of the integrals (see \cref{sec:poly_tf_order} for a summary). In \cite{MessengerBortz2021JComputPhys}, we proposed selecting the support and order of test functions by optimizing their spectral properties to dampen high-frequency noise.  More recently, in \cite{BortzMessengerDukic2023BullMathBiol}, we developed a method for constructing a test function basis that is orthogonal, multiscale, and globally supported over the time domain $[0, T]$ -- referred to here as the \emph{Multiscale Global (MG)} basis -- by analyzing the integration error (see \cref{sec:MinimumRadiusSelection} for details).\footnote{We note that there was an early article \cite{Polubarinova-Kochina1975FluidDyna} which commented on the role of integration error, but subsequent publications did not pursue this line of inquiry.}

In this work, we introduce a new method for selecting the test function basis by minimizing the (estimated) integration error. Crucially, our method only requires access to the data and does not require access to the true parameters.  It also remains robust across a range of noise levels, data resolutions, and regression methods. Specifically, this method focuses on selecting a \emph{Single-scale-Local (SL)} test function basis. Through numerical experiments, we demonstrate that the selected supports consistently align with regions of minimal parameter recovery error. 
We also compare the SL approach to our earlier \emph{Multi-scale-Global (MG)} strategy from \cite{BortzMessengerDukic2023BullMathBiol} in terms of accuracy and walltime.

The remainder of the paper is organized as follows: in \cref{sec:weakform}, we review the mathematical derivation of the WENDy (Weak form parameter Estimation of Nonlinear Dynamics) framework. In \cref{sec:tfselection}, we detail the construction and analysis of the test function set and introduce an algorithm for choosing the test function support size. \Cref{sec:poly_tf_hypeparams} focuses specifically on the piecewise polynomial test function family, where we provide guidance on choosing its hyperparameter. Numerical results are provided in \cref{sec:results}, and concluding remarks are given in \cref{sec:conclusions}.

\subsection{Weak-form Parameter Estimation} 
\label{sec:weakform}
We consider a $d$-dimensional first-order ordinary differential equation (ODE) of the form
\begin{equation}
  \dot{\ubf} = \mathbf{\Theta}(\mathbf{u}) \cdot \mathbf{W}, \quad t\in [0, T],
    \label{eq:ode}
\end{equation}
with the matrix of unknown parameters $\mathbf{W} \in \mathbb{R}^{J \times d} $ and row vectors $\mathbf{u}, \dot{\ubf}$, and $ \mathbf{\Theta}(\mathbf{u})$:
\begin{equation*}
    \begin{aligned}
      \mathbf{u}(t) &:= \left[\begin{array}{ccc}
         u_1(t) &  \cdots &  u_d(t)\\
         \end{array}\right] \in \mathbb{R}^{1 \times d},\\
    \dot{\ubf}(t) &:= \left[\begin{array}{ccc}
         \frac{du_1}{dt}(t) &  \cdots &  \frac{du_d}{dt}(t)\\
         \end{array}\right] \in \mathbb{R}^{1 \times d},\\
    \mathbf{\Theta}(\mathbf{u}) & :=  \left[\begin{array}{ccc}
         f_1(\mathbf{u}) &  \cdots &  f_J(\mathbf{u})\\
         \end{array}\right] \in \mathbb{R}^{{1 \times J}},        
    \end{aligned}
\end{equation*}
and continuous feature functions $f_j: \mathbb{R}^{1 \times d} \rightarrow \mathbb{R}$. To arrive at the weak form, we multiply both sides of \eqref{eq:ode} by a compactly supported test function $\phi(t)$ and integrate by parts to obtain
\begin{equation}
    - \int_0^T \dot{\phi}  \mathbf{u}dt = \int_0^T \phi \mathbf{\Theta}(\mathbf{u}) \mathbf{W}dt.
     \label{eq:weak_ode}
\end{equation}
The goal is to reconstruct the parameter $\mathbf{W}$ from the measurements of $\mathbf{u}$,  observed on a set of discrete time points $\{t_m\}_{m=0}^{M}$  with uniform stepsize of $\Delta t:= \nicefrac{T}{M}$. Eq. \eqref{eq:ode} in the discrete setting then becomes $\dot{\mathbf{U}}= \mathbf{\Theta}(\mathbf{U}) \mathbf{W}$ with matrices\footnote{$\dot{\mathbf{U}}$ is defined similar to $\mathbf{U}$ using $\dot{\mathbf{u}}$. } $ \mathbf{U} \in \mathbb{R}^{(M+1) \times d}$ and $ \mathbf{\Theta(U)} \in \in \mathbb{R}^{(M+1) \times J}$
defined as: 
\begin{equation*}
    \begin{aligned}
        \mathbf{U} := \left[\begin{array}{ccc}
         u_1(t_0) &  \cdots &  u_d(t_0)\\
         \vdots &  \ddots &   \vdots \\
          u_1(t_M) &  \cdots &  u_d(t_M)\\
         \end{array}\right]  \,\,\,\textrm{and}\,\,\,
         \mathbf{\Theta(U)} := \left[\begin{array}{ccc}
         f_1(u(t_0)) &  \cdots &  f_J(u(t_0))\\
         \vdots &  \ddots &   \vdots \\
         f_1(u(t_M)) &  \cdots &  f_J(u(t_M))\\
         \end{array}\right]
    \end{aligned}
\end{equation*}
 We approximate the integrals in Eq. \eqref{eq:weak_ode} using a Newton-Cotes quadrature scheme 
\begin{equation}
    - \boldsymbol{\dot{{\phi}}} \mathbf{U} \approx  \boldsymbol{\phi} \mathbf{\Theta(U)} \mathbf{W},
    \label{eq:discr_phi}
\end{equation}
where the row vectors $\boldsymbol{\phi}, \boldsymbol{\dot{{\phi}}} \in \mathbb{R}^{1 \times (M+1)}$ denote: 
\begin{equation}
\begin{aligned}
    \boldsymbol{\phi}  := \left[\begin{array}{ccc}
         \phi(t_0) &  \cdots &  \phi(t_M)
         \end{array}\right]\mathcal{Q}  \quad\textrm{and}\quad 
     \boldsymbol{\dot{{\phi}}} := \left[\begin{array}{ccc}
         \phidot(t_0) &  \cdots &  \phidot(t_M)
         \end{array}\right]\mathcal{Q}, 
    \end{aligned}
    \label{eq:vecPhi}
\end{equation}
with the quadrature\footnote{In this work, we only use the composite Trapezoidal method to approximate the integral. See \cref{sec:tfselection} below for more information.} matrix $\mathcal{Q} \in \Rbb^{(M+1) \times (M+1)}$.
In practice, we use a collection of $K$ test functions, denoted by $\Kcalbf:= \{ \phi_k\}_{k=1}^K$\footnote{The construction of the set $\Kcalbf$, see \cref{def:tf_set}, and its analysis is presented in  \cref{sec:tfselection}.},
which yields $K$ equations of the form \eqref{eq:discr_phi}, allowing us to cast the identification of $\mathbf{W}$ into a least squares problem:
\begin{equation*}
    \min_{\mathbf{W}} \norm{ \textsf{vec}( \mathbf{B} -\mathbf{G}\mathbf{W}) }_2^2,
    \label{eq:WENDy}
\end{equation*}
where $\mathbf{G: = \mathbf{\Phi}\mathbf{\Theta(U)}} \in \mathbb{R}^{K \times J}$, $ \mathbf{B} := -\mathbf{\dot{\Phi}}\mathbf{U} \in \mathbb{R}^{K \times d}$ and the $\textsf{vec}$ operator vectorizes a matrix. The test function matrices $\Phibf, \Phidotbf \in \Rbb^{K \times (M+1)} $ are defined as
\begin{equation*}
\begin{aligned}
\Phibf   &:=  \left[\begin{array}{ccc} 
\vecPhi_1^T & \cdots & \vecPhi_K^T
 \end{array}\right]^T  \quad\textrm{and}\quad
 \Phidotbf & := \left[\begin{array}{ccc} 
\vecPhidot_1^T & \cdots & \vecPhidot_K^T
 \end{array}\right]^T.
\end{aligned}
\end{equation*}
We formally refer to this framework as the \emph{Weak-form Estimation of Nonlinear Dynamics (WENDy)}. The coefficient matrix $\Wbf$ can  be solved using Ordinary Least Squares \emph{(WENDy-OLS)} via the normal equation 
\begin{equation}
    \Wbf_0 = \parens{\Gbf^T \Gbf}^{-1}\Gbf^T\Bbf.
    \label{eq:w0_OLS}
\end{equation}

In this work, we also assume that the measurements of $\ubf$ are corrupted by additive Gaussian noise, i.e., $\Ubf_{i,m} = u_i^{\star}(t_m) + \epsilon_i(t_m)$ where $\epsilon_i(t_m)$ is i.i.d. $\mathcal{N}(0, \sigma^2)$. Here, any term with a $\star$ superscript denotes the true (i.e., noise-free) measurement data or is a function of such data, e.g, noise-free data matrix $\Ubfstar$; true parameters $\Wbfstar$; true feature matrix $\Thetabfstar:= \Thetabf(\Ubfstar)$ as well as $\Gbfstar := \Phibf \Thetabfstar$ and $\Bbfstar := -\Phidotbf \Ubfstar$.  

Once measurement noise is introduced, the OLS formulation (e.g., \eqref{eq:w0_OLS}) 
becomes inadequate for identifying the true parameter vector $\Wbfstar$, as it fails to account for the Errors-in-Variables (EiV) nature of the problem, where both the regression matrix $\Gbf$ and response matrix $\Bbf$ are functions of noisy observations. To address this limitation, the \emph{WENDy-IRLS} algorithm \cite{BortzMessengerDukic2023BullMathBiol} employs an \emph{Iteratively Reweighted Least Squares} scheme that approximates the noise-induced covariance structure and yields more reliable parameter estimates. The full details of WENDy-IRLS are provided in \cite{BortzMessengerDukic2023BullMathBiol}, but for the interested reader, the residual decomposition in \cref{sec:ApproxIntegral} combined with the derivation of the covariance correction in \cref{sec:WENDy} provides an overview of the algorithm.

As an aside, we note that in addition to WENDy-OLS and WENDy-IRLS, our group has recently developed WENDy-MLE \cite{RummelMessengerBeckerEtAl2025arXiv250208881}, a maximum likelihood-based version that is capable of parameter estimation for models that are nonlinear in the parameters and subject to an arbitrary (but specified) noise structure. This method is built within the WENDy framework using nonlinear optimization techniques. While we anticipate that the integration error analysis introduced in this paper will still apply to WENDy-MLE, a detailed investigation is beyond the scope of the current work.


\section{Construction and Analysis of the Test Function Set}
\label{sec:tfselection}
In this section, we describe the construction of the test function set $\Kcalbf$ and present a theoretical analysis of the discretization error arising in weak form integrals involving this set. Building on this analysis, we develop a practical, data-driven method for selecting the test function support radius $r$.

Following our previous work, the set $\Kcalbf$ is constructed by translating a compactly supported reference function $\psi(t)$, as formally defined below.
\begin{definition}[Construction of the Test Function Set $\Kcalbf$]
\label{def:tf_set}
Let $\psi(t)$ be a reference function with compact support on $[-r, r]$, where $r \leq T/2$. Given a temporal grid $\{t_m\}_{m=0}^{M}$ and a set of $K$ center points $\{t_{m_k}\}_{k=1}^K \subset \{t_m\}$ such that $[t_{m_k} - r, t_{m_k} + r] \subset [0, T]$ for each $k$, we construct the test function set $\Kcalbf := \{ \phi_k(t) \}_{k=1}^K$ by
\begin{equation*}
    \phi_k(t) := \psi(t - t_{m_k}), \quad \text{for } k = 1, \dots, K.
\end{equation*}
\end{definition}

In our previous works, we have employed two families of reference test functions $\psi(t;r)$: piecewise polynomial \eqref{eq:tf_poly} and $C^{\infty}$ bump functions \eqref{eq:tf_CinftyBump}. A detailed description of these functions is provided in \cref{sec:reference_tf}.

The test function set $\Kcalbf$ is parameterized by the size of the compact support (e.g., the radius $r$), the number of test functions $K$, and the location of the centers of each $\phi_k$. Optimally choosing all these hyperparameters is challenging, and in this work, we focus our efforts on deriving a mathematical justification for the selection of the radius. Concerning the other properties of $\Kcalbf$, we will adopt strategies based on empirical evidence, e.g., a uniform placement strategy with dense spacing and significant overlap between neighboring test functions\footnote{Increasing the radius $r$ reduces the number of allowable test functions $K$ on a fixed temporal resolution.}. This design choice is motivated by empirical results from \cite{MessengerBortz2021MultiscaleModelSimul}, which demonstrate that overlapping test functions lead to improved parameter recovery\footnote{This setup also enables interpreting the weak-form integrals as convolutions with the reference function $\psi$, which allows efficient evaluation using the Fast Fourier Transform (FFT), as described in \cite{MessengerBortz2021JComputPhys}.}.

With this setup, we now present the main contributions of this paper. We begin in \cref{sec:ApproxIntegral} by establishing an asymptotic expansion for the truncation error arising from discretizing the weak-form equation \eqref{eq:weak_ode} using the trapezoidal rule, as shown in \cref{lem:a}. 
From there, in \cref{sec:radiusselection}, we propose a practical method to approximate the integration error without requiring access to the true parameters, and use this approximation to develop a data-driven strategy for selecting the test function radius.

\subsection{Approximating the Weighted Integral}\label{sec:ApproxIntegral}

In the development that follows, the accuracy of the weighted integral approximation defined in \eqref{eq:discr_phi}-\eqref{eq:vecPhi} plays a central role. To understand the motivation for focusing on this approximation, we consider the residual for a $1$-dimensional $\ubf \in \Rbb^{(M+1)}$
\begin{equation*}
\rbf(\mathbf{u},\wbf):=\Gbf\wbf-\bbf \in \Rbb^K,
\end{equation*} 
which admits the following decomposition: 
\begin{equation}
    \begin{aligned}
        \rbf(\ubf,\wbf)=\underbrace{(\Gbf-\Gbf^\star)\wbf}_{\begin{array}{c}\ebf_\Theta\end{array}}+\underbrace{\Gbf^\star(\wbf-\wbf^{\star})}_{\begin{array}{c}\rbf_{0}\end{array}}+\underbrace{(\Gbf^\star\wbf^{\star}-\bbf^\star)}_{\begin{array}{c}\ebf_{\text{int}}\end{array}}-\bbf^{\pmb{\varepsilon}}.
    \end{aligned}\label{eq:ResidualDecomposition}
\end{equation}
The terms $\ebf_\Theta$ and $\bbf^\varepsilon$ reflect the impact of the noise on the ODE, while $\rbf_0$ corresponds to the residual with arbitrary $\wbf$ for no measurement noise or integration error. The term $\ebf_{\text{int}}$ is the total numerical integration error induced by the quadrature of the integrals in \eqref{eq:weak_ode} and, as such, is not impacted by the level of noise in the data.  When $\ebf_\textrm{int}$ is negligible, \eqref{eq:ResidualDecomposition} is then dominated by the effect of noise and the specific value of $\wbf$. 

While the choice of the support radius will naturally impact all of the terms in \eqref{eq:ResidualDecomposition}, we focus on its effect on the integration error $\ebfint$. For DEs with bounded solutions (over a compact domain), $\ebf_\textrm{int}(r)$ exhibits an initial exponential decay as $r$ increases, followed by stagnation due to roundoff (see \cref{fig:eehat} for an illustration). 

Based on this observation, we conjecture that the expected error in the parameter estimate is minimized at the radius that minimizes $\ebfint(r)$, which corresponds to the smallest $r$ value beyond which $\ebfint(r)$ no longer improves. However, because the stagnation region has very small magnitude and a nearly flat slope, we avoid optimization, and instead identify the \emph{changepoint $r_c$} in the curve $\log(\norm{\ebfint(r)})$ that marks the transition between exponential decay and stagnation. The logarithmic scale is used to account for the drastic difference in magnitude across the decay range.  In this work, we compute $r_c$ by applying \cref{alg:changepoint} to $\log(\norm{\ebfint(r)})$, over the interval $r \in [t_1, t_{M/2}]$. This algorithm approximates the curve using two linear segments: one fit to the initial decay region and the other to the stagnation region, and selects the changepoint that minimizes the combined fitting error across both intervals\footnote{For additional details, see \cref{sec:changepoint}.}.
We will now state the conjecture formally and defer providing empirical evidence to \cref{sec:OptRad4PWPolyTF} (after we have chosen a specific $\psi$).



\begin{conjecture}
Assume that residual in \eqref{eq:ResidualDecomposition} is zero-mean, i.e., $\mathbb{E}[\rbf(\ubf, \wbf)] = \mathbf{0}$. For $r_c$ denoting the point at which the monotonic decrease of $\log(\norm{\ebfint(r)})$ stagnates, we conjecture that 
$$ r_c \equiv \arg\min_{r} \mathbb{E}\brackets{\norm{\widehat{\wbf}(r) - \wstar}_2}, $$
where $\widehat{\wbf}(r)$ is the WENDy estimator with test function radius $r$.
\label{conj:optimal_radius}
\end{conjecture}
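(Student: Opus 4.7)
The plan is to start from the residual decomposition in \eqref{eq:ResidualDecomposition} and convert it into an explicit linearization of the parameter error $\widehat{\wbf}(r) - \wstar$. Since $\widehat{\wbf}(r)$ is the WENDy-OLS estimator, the normal equations give $\Gbf^T \rbf(\ubf, \widehat{\wbf}) = 0$, so substituting $\rbf_0 = \Gbf^\star(\widehat{\wbf} - \wstar)$ in \eqref{eq:ResidualDecomposition} and solving yields
\begin{equation*}
\widehat{\wbf}(r) - \wstar \;\approx\; \parens{\Gbf^T \Gbf}^{-1} \Gbf^T \parens{\bbf^{\pmb{\varepsilon}} - \ebf_\Theta - \ebf_{\text{int}}},
\end{equation*}
where the approximation replaces $\Gbf^\star$ by $\Gbf$ in the leading coefficient and is justified to leading order under the zero-mean hypothesis on $\rbf$. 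Taking $\mathbb{E}\norm{\cdot}_2$ then isolates an integration-error contribution depending on $r$ only through $\ebf_{\text{int}}(r)$ and a noise contribution depending on $r$ through the smoothing properties of the test functions and the allowable number of centers $K(r)$.

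Next I would show that, up to lower-order terms, the expected error splits into a monotone-in-$r$ bias piece tracking $\norm{\ebf_{\text{int}}(r)}_2$ and a slowly varying noise piece. For the bias piece I would invoke \cref{lem:a}, whose truncation-error expansion makes it explicit that $\norm{\ebf_{\text{int}}(r)}_2$ decays essentially exponentially in $r$ and stagnates at the roundoff floor at $r_c$. For the noise piece I would Taylor expand $\Thetabf$ around $\Ubfstar$ to express $\ebf_\Theta$ as a linear functional of the noise plus a quadratic remainder, then argue that both this linear part and $\bbf^{\pmb{\varepsilon}}$ induce errors whose variance is controlled by the effective degrees of freedom $K(r)$ and the $L^2$-energies of $\phi$ and $\dot{\phi}$, quantities that depend only mildly on $r$ for $r \gtrsim r_c$. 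Combining these, for $r < r_c$ the rapid decay of $\norm{\ebf_{\text{int}}(r)}_2$ dominates and the expected parameter error is strictly decreasing in $r$, whereas for $r > r_c$ the integration error is flat, $K(r)$ shrinks, and the noise piece is at best non-decreasing; the minimizer is therefore attained at $r_c$.

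The principal obstacle --- and, I suspect, the reason the statement is posed as a conjecture rather than a theorem --- is making the noise-piece estimate sharp enough to line up \emph{exactly} with the stagnation point of the integration error, rather than merely locating the optimum in some neighborhood of $r_c$. Three sources of difficulty interact: the conditioning of $\parens{\Gbf^T\Gbf}^{-1}$ shifts non-trivially with $r$ since larger $r$ makes the rows of $\Phibf$ more correlated; $\ebf_\Theta$ is a nonlinear function of the noise whose expectation need not vanish once some $f_j$ is nonlinear, so the Taylor remainder must be controlled carefully; and $K(r)$ depends on the placement strategy rather than on $r$ alone. A plausible way to finesse these issues is to prove a two-sided sandwich bound of the form $\mathbb{E}\norm{\widehat{\wbf}(r) - \wstar}_2 \asymp \norm{\ebf_{\text{int}}(r)}_2 + \sigma\, C(r)$ with $C(r)$ provably slowly varying, and to deduce \cref{conj:optimal_radius} as a corollary --- possibly first in the WENDy-IRLS setting of \cref{sec:WENDy}, where the explicit covariance correction cleanly decouples the noise contribution from the integration error.
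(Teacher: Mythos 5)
The first thing to say is that the paper does not prove this statement: it is posed explicitly as a conjecture, supported only by the empirical evidence in \cref{fig:params_error} and \cref{sec:OptRad4PWPolyTF}, and the authors state in the conclusions that a rigorous justification ``remains open.'' So there is no paper proof to compare your argument against; the relevant question is whether your sketch closes the gap the authors left, and it does not --- a fact you partly concede yourself.

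The concrete gaps are these. First, after linearizing the normal equations you take $\mathbb{E}\norm{\cdot}_2$ and assert that the expected error ``splits'' into a bias piece tracking $\norm{\ebfint(r)}_2$ and a noise piece; but the expectation of a norm is not additive over the summands of \eqref{eq:ResidualDecomposition}, so at best you get one-sided triangle-inequality bounds, and the deterministic term $\parens{\Gbf^T\Gbf}^{-1}\Gbf^T\ebfint$ is itself random through $\Gbf$. Second, the claim that for $r>r_c$ the noise contribution is ``at best non-decreasing'' is exactly the hard part and is simply asserted: the conditioning of $\Gbf^T\Gbf$, the shrinking $K(r)$, and the energies of $\phi$ and $\dot\phi$ all move with $r$ in ways that are not controlled, and $\ebf_\Theta$ has nonvanishing mean once some $f_j$ is nonlinear (this is precisely the EiV bias the paper discusses, and it visibly shifts the empirical minimizer away from $r_c$ at high noise for the logistic and FitzHugh--Nagumo examples, which already strains the conjectured identity $r_c \equiv \arg\min_r$). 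Third, even if you established the sandwich bound $\mathbb{E}\norm{\widehat{\wbf}(r)-\wstar}_2 \asymp \norm{\ebfint(r)}_2 + \sigma C(r)$ with $C$ slowly varying, an $\asymp$ statement only localizes the minimizer to a neighborhood; it cannot deliver the exact coincidence with the changepoint that the conjecture asserts. Your outline is a sensible research program --- and the suggestion to attempt it first in the IRLS setting where the covariance correction isolates the noise term is a good one --- but as written it is a heuristic consistent with the paper's own informal motivation in \cref{sec:ApproxIntegral}, not a proof.
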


Guided by this conjecture, our efforts will now focus on accurately estimating the error in the quadrature.  For reasons that will become clear below, we use the composite trapezoidal rule, leading to the quadrature weight matrix
\begin{equation*}
\mathcal{Q} = \text{diag}\parens{\frac{\dt}{2}, \dt, \cdots, \dt, \frac{\dt}{2}} \in \Rbb^{(M+1) \times (M+1)}.
\end{equation*}
We will now closely consider the truncation error introduced when discretizing the weak-form equation \eqref{eq:weak_ode} for a one-dimensional solution $u$. The following lemma establishes an asymptotic expansion of this error for a test function $\phi$.
\begin{lemma}
    \label{lem:a}
    Let $\ustar(t)$ be a smooth solution to the $1$-dimensional ODE in \eqref{eq:ode}, and let $\ebf_{\text{int}}(\phi, \ustar):= \vecPhi \Thetabf(\ubf^{\star})\wbf + \vecPhidot \ubf^{\star}$ denote the truncation error from applying the trapezoidal rule to the weak form integrals in \eqref{eq:weak_ode}. Then, the following asymptotic expansion holds: 
  \begin{equation*}
      \ebf_{\text{int}}(\phi, \ustar) = \frac{1}{\sqrt{T}} \sum_{n \in\Zbb} \widehat{\phi}_n I_n,
  \end{equation*}
  where $\widehat{\phi}_n$ are the Fourier coefficients of the test function $\phi(t)$, and $I_n$ is given asymptotically\footnote{The ``$\sim$'' notation denotes asymptotic equivalence, meaning that truncating the series provides increasingly accurate approximations, but the series may not converge.} by
  \begin{equation}
    \begin{aligned}
        I_n & \sim \ustar \big|_{0}^{T}  + \sum_{s=1}^{\infty} \dt^{2s} \frac{B_{2s}}{(2s)!} 
          \parens{ \sum_{l = 0}^{2s} \binom{2s}{l} 
         \parens{ \frac{2\pi in}{T} }^{2s-l}u^{\star(l)}\big|_{0}^{T}},
    \end{aligned}
    \label{eq:I_n}
\end{equation}
and $B_{2s}$ are the non-zero Bernoulli numbers.
\end{lemma}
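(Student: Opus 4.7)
My plan is to first reduce the discrete error to a single composite trapezoidal rule application. Since $\ustar$ solves the ODE, $\Thetabf(\ustar)\wbf=\dot\ustar$, so the integrand appearing in the weak form \eqref{eq:weak_ode} collapses to $\phi\,\dot\ustar+\dot\phi\,\ustar=(\phi\,\ustar)'$. Because $\vecPhi$ and $\vecPhidot$ are precisely the composite trapezoidal evaluations of $\phi$ and $\dot\phi$ against sampled integrands,
$$\ebf_{\text{int}}(\phi,\ustar)=\vecPhi\,\Thetabf(\ubf^{\star})\,\wbf+\vecPhidot\,\ubf^{\star}=\mathrm{Trap}\bigl[(\phi\,\ustar)'\bigr],$$
where $\mathrm{Trap}[\cdot]$ denotes the composite trapezoidal quadrature on $\{t_m\}_{m=0}^{M}$.

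Next I expand $\phi$ in its Fourier series on $[0,T]$, writing $\phi(t)=\tfrac{1}{\sqrt{T}}\sum_{n\in\Zbb}\widehat{\phi}_n\,\phi_n(t)$ with $\phi_n(t):=e^{2\pi i n t/T}$. By linearity of the trapezoidal sum,
$$\ebf_{\text{int}}(\phi,\ustar)=\frac{1}{\sqrt{T}}\sum_{n\in\Zbb}\widehat{\phi}_n\,I_n,\qquad I_n:=\mathrm{Trap}\bigl[(\phi_n\,\ustar)'\bigr],$$
so the lemma reduces to establishing the claimed expansion for the per-mode error $I_n$.

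The heart of the proof is then a direct application of the Euler--Maclaurin formula to $f_n:=(\phi_n\,\ustar)'$. The continuous integral is $\int_0^T f_n\,dt=[\phi_n\,\ustar]_0^T$, and because $\phi_n(0)=\phi_n(T)=1$ for every $n\in\Zbb$, this contributes the leading term $\ustar|_0^T$ in \eqref{eq:I_n}. The Euler--Maclaurin corrections take the form
$$\sum_{s\ge 1}\frac{B_{2s}}{(2s)!}\,\dt^{2s}\,\bigl[f_n^{(2s-1)}\bigr]_0^T=\sum_{s\ge 1}\frac{B_{2s}}{(2s)!}\,\dt^{2s}\,\bigl[(\phi_n\,\ustar)^{(2s)}\bigr]_0^T,$$
and Leibniz combined with $\phi_n^{(j)}=(2\pi i n/T)^{j}\phi_n$ yields
$$(\phi_n\,\ustar)^{(2s)}(t)=\sum_{l=0}^{2s}\binom{2s}{l}\left(\frac{2\pi i n}{T}\right)^{2s-l}\phi_n(t)\,\ustar^{(l)}(t);$$
evaluating at $t=0,T$ with $\phi_n=1$ at both endpoints reproduces exactly the inner binomial sum of \eqref{eq:I_n}.

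The main technical subtlety is that Euler--Maclaurin produces an asymptotic (rather than convergent) expansion in $\dt$, which is precisely why the expression for $I_n$ carries the ``$\sim$'' symbol; the outer identity $\ebf_{\text{int}}=T^{-1/2}\sum_n\widehat{\phi}_n I_n$ is itself an exact equality. The only additional bookkeeping is justifying the interchange of the Fourier sum with the trapezoidal evaluation, which is immediate for any $\phi$ whose Fourier series converges at the grid points -- certainly true for the $C^{\infty}$ bump and piecewise polynomial families of \cref{def:tf_set}. Beyond that the argument is routine, and I expect the only step requiring genuine care to be the initial reduction $\ebf_{\text{int}}=\mathrm{Trap}[(\phi\,\ustar)']$, after which the Fourier-mode Euler--Maclaurin calculation unfolds mechanically.
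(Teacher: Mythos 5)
Your proposal is correct and follows essentially the same route as the paper's proof: reduce the discrete error to the trapezoidal sum of $\tfrac{d}{dt}(\phi\,\ustar)$ using $\Thetabf(\ustar)\wbf=\dot{\ustar}$, Fourier-expand $\phi$ to isolate the per-mode errors $I_n=\mathrm{Trap}\bigl[(\ustar e^{2\pi i n t/T})'\bigr]$, and apply Euler--Maclaurin with the Leibniz rule and $\phi_n(0)=\phi_n(T)=1$ to obtain \eqref{eq:I_n}. The only cosmetic difference is that the paper expands $\phi$ and $\dot\phi$ separately before recombining, whereas you collapse to $(\phi\,\ustar)'$ first; the resulting computation is identical.
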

\begin{proof}
This proof follows from the Euler–Maclaurin formula applied to Fourier integrals of a smooth solution $u(t)$. For further details, see 
\cref{sec:proof_lemma}.
\end{proof}

\subsection{Radius Selection}
\label{sec:radiusselection}
In this subsection, we present the primary contribution of this work: a robust, efficient, and practical method for choosing the radius of the compact support for the reference test function.  This method leverages an approximation for the integration error that arises from discretizing weak-form integrals using the test function set 
$\Kcalbf$. We build on the asymptotic expansion established in \cref{lem:a} to  enable a data-driven strategy for selecting the radius $r$ that minimizes integration error without requiring knowledge of the true parameter $\Wbfstar$.

Recall that for a one-dimensional solution $\ustar$, the discretization error introduced by applying the trapezoidal rule to the weak-form integrals is given by
\begin{equation}
    \ebf_{\text{int}}(\ubf^{\star}) := \Gbfstar \wbf^{\star} - \bbf^{\star} = \Phibf \Thetabfstar \wbf^{\star} + \Phidotbf \ubf^{\star}  \in \Rbb^K.
    \label{eq:e_int}
\end{equation}
 Each component of $\ebfint$ corresponds to the integration error associated with a test function $\phi_k$, where the set $\Kcalbf$ is defined by translating the reference test function $\psi$ as described in \cref{def:tf_set}. 

Motivated by \cref{lem:a}, which provides an asymptotic expansion associated with a single test function $\phi$, we extend this formulation to approximate the full vector $\ebfint$. To do so, we now introduce several components that appear in the approximation. Let $\Pbf_{(k)} \in \Rbb^{K \times M}$ be a \emph{row selection matrix} that extracts rows indexed by test function centers $\{ t_{m_k}\}$, such that $\Pbf_{(k)} \Abf$ returns the subset of rows $\Abf_{m_k}$ for any $\Abf \in \Rbb^{M \times M}$. $\Pbf_{(k)}$ is defined entrywise by
\begin{equation}
\label{eq:P_proj}
 \left[\Pbf_{(k)}\right]_{i, j} := \begin{cases} 
     &  1 \quad i = k \text{ and } j = m_k, \\
    & 0 \quad \text{otherwise}.
        \end{cases}
\end{equation} 
We define the set of M frequencies\footnote{Note that $n$ here is also used as a zero-based index, with negative frequencies wrap around as upper indices.} as 
\begin{equation}
\label{eq:freq}
    \begin{aligned}
        \nbf = \begin{cases}
     \{ 0, 1, \cdots, \frac{M}{2}-1, -\frac{M}{2}, \cdots, -1 \}  &\quad M \text{ even }, \\
      \{ 0, 1, \cdots, \frac{M-1}{2}, -\frac{M-1}{2}, \cdots, -1 \} &\quad M \text{ odd }. \\
\end{cases}
\end{aligned}
\end{equation}
From this frequency set, we construct a diagonal matrix $\Psihatbf \in \mathbb{C}^{M \times M}$, whose entries are the Fourier coefficients of the reference function $\psi$, along with a vector $\dbf \in \mathbb{C}^M$, given by
\begin{equation}
\begin{aligned}
    \Psihatbf := \text{diag} \parens{\parens{\widehat{\psi}_n}_{n \in \nbf}}, \quad \dbf := \parens{\frac{2\pi i n}{T}}_{n \in \nbf}
\end{aligned}
\label{eq:psihat_d}
\end{equation}
Each integral $I_n$ (as defined in \eqref{eq:eint_expand}) appearing in the expansion from \cref{lem:a} is approximated using a truncated Euler–Maclaurin series of order $S \in \mathbb{N}$. The required endpoint derivatives of $\ustar$ are approximated via finite difference schemes, where the $l$-th derivative is estimated using a stencil of order $\mu_l$, denoted $\mathcal{D}^l_{\mu_l}[\cdot]$. The collection of these stencil orders is stored in the vector $\boldsymbol{\mu} = [\mu_1, \dots, \mu_{2S}]$. 

With all components in place, we now present the approximation for $\ebfint$.
\begin{proposition}
\label{prop:eint_hat}
Let $\ustar(t)$  be a smooth solution to the one-dimensional ODE in \eqref{eq:ode}, and consider a collection of $K$  test functions $ \{ \phi_k \}_{k=1}^K$ defined in \cref{def:tf_set} with centers $\{t_{m_k}\}_{k=1}^K$, where $t_{m_k} = m_k \dt$. The integration error $\ebf_{\text{int}} = \Gbfstar \wbf^{\star} - \bbf^{\star} \in \Rbb^{K}$ \eqref{eq:e_int} admits the following approximation:
\begin{equation}
\widehat{\ebf}_{\mathrm{int}}(\ubf^\star, S, \boldsymbol{\mu}) := \frac{1}{\sqrt{T}}\Pbf \Fbf \Psihatbf \Ibf(\ubf^\star, S, \boldsymbol{\mu}) \in \Rbb^K,
\label{eq:eint_hat}
\end{equation} 
where $\Pbf \in \Rbb^{K \times M}$ and $\Psihatbf \in \mathbb{C}^{M \times M}$ are defined in \eqref{eq:P_proj}  and \eqref{eq:psihat_d} respectively; $\Fbf \in \mathbb{C}^{M \times M}$ is the Discrete Fourier Transform (DFT) matrix, and the vector\footnote{Throughout this paper, we use bold uppercase letters for matrices and bold lowercase letters for vectors. However, for the computation of $\ebfhatint$, we use $\Ibf$ to denote a vector.}
 $\Ibf (\ubf^\star, S, \boldsymbol{\mu}) \in \mathbb{C}^M$ is given by
\begin{equation}
    \begin{aligned}
        \Ibf(\ubf^\star, S, \boldsymbol{\mu}) := \ustar|_0^T + \sum_{s=1}^{S} \dt^{2s} \frac{B_{2s}}{(2s)!} \parens{ \sum_{l = 0}^{2s} \binom{2s}{l} \dbf^{2s-l} \mathcal{D}^{l}_{\mu_l}[\ubf^\star]|_0^T },
    \end{aligned}
    \label{eq:I_approx}
    \end{equation}
with $\dbf \in \mathbb{C}^M$ defined in \eqref{eq:psihat_d}.  
\end{proposition}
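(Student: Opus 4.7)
My plan is to derive \eqref{eq:eint_hat} by applying \cref{lem:a} to each test function $\phi_k$ individually, using the Fourier translation identity to express $\widehat{\phi}_{k,n}$ in terms of $\widehat{\psi}_n$, and then collecting the $K$ resulting scalar expressions into a single matrix identity while introducing the two finite approximations---series truncation and finite-difference derivatives---that define $\widehat{\ebf}_{\mathrm{int}}$.

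First, for each $k$ I would invoke \cref{lem:a} to write
\[
[\ebfint]_k \;=\; \frac{1}{\sqrt{T}} \sum_{n \in \mathbb{Z}} \widehat{\phi}_{k,n}\, I_n.
\]
Because $\phi_k(t) = \psi(t - t_{m_k})$ with $t_{m_k} = m_k \dt$ and $\dt = T/M$, the Fourier-shift identity yields $\widehat{\phi}_{k,n} = e^{-2\pi i n m_k / M}\, \widehat{\psi}_n$. Substituting and then truncating the sum from $\mathbb{Z}$ down to the $M$-element set $\nbf$ from \eqref{eq:freq} produces
\[
[\ebfint]_k \;\approx\; \frac{1}{\sqrt{T}} \sum_{n \in \nbf} e^{-2\pi i n m_k / M}\, \widehat{\psi}_n\, I_n,
\]
which is precisely the $m_k$-th component of $\Fbf\,\Psihatbf\,\Ibf^{\mathrm{exact}}$, where $[\Fbf]_{m,n} = e^{-2\pi i m n / M}$ and $\Ibf^{\mathrm{exact}}$ is the vector of $I_n$ indexed by $n \in \nbf$. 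Stacking these $K$ scalar identities and selecting the relevant rows via $\Pbf$ from \eqref{eq:P_proj} collapses everything into $\ebfint \approx \tfrac{1}{\sqrt{T}}\Pbf\Fbf\Psihatbf\Ibf^{\mathrm{exact}}$.

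Next, I would replace $\Ibf^{\mathrm{exact}}$ with the computable $\Ibf(\ubf^\star,S,\boldsymbol{\mu})$ defined in \eqref{eq:I_approx} by (i) truncating each Euler-Maclaurin series $I_n$ at $s = S$ and (ii) replacing each exact endpoint derivative $u^{\star(l)}|_0^T$ by the order-$\mu_l$ finite-difference estimate $\mathcal{D}^{l}_{\mu_l}[\ubf^\star]|_0^T$. Since neither modification touches the outer linear structure $\tfrac{1}{\sqrt{T}}\Pbf\Fbf\Psihatbf$ acting on a frequency-indexed vector, both approximations can be absorbed directly into $\Ibf$, yielding \eqref{eq:eint_hat}.

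The main technical obstacle I anticipate is reconciling the sign and indexing conventions so that the Fourier-shift phase $e^{-2\pi i n m_k / M}$ inherited from \cref{lem:a} precisely matches the DFT entry $[\Fbf]_{m_k, n}$; this requires careful handling of the fact that $\nbf$ wraps negative frequencies into the upper half of the index range, and of the sign convention used to define $\widehat{\psi}_n$ in \cref{lem:a}. Once the conventions are synchronized, the assembly into the stated matrix form is routine linear algebra and the claim follows.
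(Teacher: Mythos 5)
Your proposal is correct and follows essentially the same route as the paper's proof: apply \cref{lem:a} to each $\phi_k$, use the Fourier translation identity $\widehat{\phi_k}_n = e^{-2\pi i n m_k/M}\widehat{\psi}_n$, truncate to the $M$ modes in $\nbf$ to recognize the $m_k$-th row of $\Fbf$, replace each $I_n$ by its order-$S$ Euler--Maclaurin truncation with finite-difference endpoint derivatives, and stack over $k$ with $\Pbf$ selecting the centered rows. The indexing caveat you flag is handled in the paper exactly as you anticipate, via the wrap-around convention in \eqref{eq:freq}.
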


\begin{proof}
The proof follows by applying \cref{lem:a} to each test function $\phi_k \in \Kcalbf$. Full details are provided in \cref{sec:proof_prop_ehat}.
\end{proof}

\begin{remark}
\label{remark:FFT}
The approximation in \eqref{eq:eint_hat} can be computed efficiently using the Fast Fourier Transform (FFT). In particular,  
$$\widehat{\ebf}_{\mathrm{int}} = \frac{1}{\sqrt{T}}\Pbf \Fbf \Psihatbf \Ibf = \Pbf\, \mathcal{F}[\Psihatbf \Ibf], $$ 
where $\mathcal{F}[\cdot]$ denotes the discrete Fourier transform, efficiently evaluated via FFT. Since both $\Pbf$ and $\Psihatbf$ are sparse, their corresponding matrix operations can also be performed with minimal computational overhead.
\end{remark}


Guided by \cref{conj:optimal_radius}, we can now state our data-driven strategy for selecting the radius $r$.

\vspace{2mm}
\noindent\fbox{\begin{minipage}{0.95\textwidth}
\begin{definition}
For a smooth solution $\ustar$ to the $1$-dimensional ODE in \eqref{eq:ode}, we identify the \underline{critical radius $\rchat$} as the changepoint of $log( \widehat{\erm}(\ubf^\star, r))$, where
\begin{equation}
\label{eq:ehat_u}
    \begin{aligned}
        \widehat{\erm}(\ubf^\star, r) := \frac{1}{\sqrt{K(r)}}\norm{\widehat{\ebf}_{\text{int}}(\ubf^\star, r, S, \boldsymbol{\mu})}_2
         =  \frac{1}{\sqrt{K(r)}}\norm{\frac{1}{\sqrt{T}}\Pbf(r)\Fbf \Psihatbf(r) \Ibf(\ubf^\star, S, \boldsymbol{\mu}) }_2,
    \end{aligned}
\end{equation}
for $r = m\dt$ for $m = 2, 3, \cdots, \lfloor M/2 \rfloor$, and $\ebfhatint(\ubf^\star, r, S, \boldsymbol{\mu}) \in \Rbb^{K(r)}$ is defined in  \cref{prop:eint_hat}.
\end{definition}
\end{minipage}}
\vspace{2mm}

We note that the number of test functions $K(r)$, their centers, and the structure of the selection matrix $\Pbf(r)$ all depend on the radius $r$. We also note that the $\log$ transform does not change the $\rchat$ value, but allows a changepoint algorithm to effectively find $\rchat$ as the transition point between two linear functions.

This formula has broad implications for WSciML techniques, and here we present several important remarks concerning implementation and implications:

\begin{remark}
To extend this strategy to a $d$-dimensional system $\Ubfstar$,
we can define the overall estimated integration error by concatenating the vectors $\widehat{\ebf}_{\text{int}}(\Ubfstar_i, r) \in \Rbb^K $ for each component $\Ubfstar_i$ (omitting the arguments $S$ and $\boldsymbol{\mu}$ for clarity). The critical radius $\rchat$ is then identified as the changepoint of the curve $\log\big( \widehat{\erm}(\Ubfstar, r) \big)$, where
\begin{equation}
\label{eq:ehat_U}
    \begin{aligned}
        \widehat{\erm}(\Ubfstar, r)& := \frac{1}{\sqrt{K(r)}}\norm{ 
        \left[\begin{array}{cccc}
    \widehat{\ebf}_{\text{int}}(\Ubfstar_1, r)^T & \widehat{\ebf}_{\text{int}}(\Ubfstar_2, r)^T & \cdots & \widehat{\ebf}_{\text{int}}(\Ubfstar_d, r)^T
    \end{array}\right]^T }_2.
    \end{aligned}
\end{equation}
\end{remark}

\begin{remark}While the procedure constitutes a sweep over all values of radius $r$,  the computation of $\widehat{\erm}(\Ubfstar, r)$ is highly parallelizable, as each radius value can be processed independently. Furthermore, the vectors $\Ibf(\Ubfstar_i, S, \boldsymbol{\mu})$ \eqref{eq:I_approx}, discussed in \cref{prop:eint_hat}, are independent of $r$ and may therefore be computed once and reused across all evaluations. The construction of  $\widehat{\ebf}_{\text{int}}(\Ubfstar_i, r,  S, \boldsymbol{\mu})$ can also be efficiently implemented via the Fast Fourier Transforms, as discussed in \cref{remark:FFT}.
\end{remark}

\begin{remark}In practice, only noisy data $\Ubf$ is available. Nonetheless, the same procedure can be applied by using $\Ubf$ to compute $\widehat{\erm}(\Ubf, r, S, \boldsymbol{\mu})$. This requires approximating derivatives of $\Ubf$ at the endpoints up to order $2S$, which can be challenging. However, since derivatives are needed only at the boundaries, a relatively small number of points (much fewer than $M+1$) suffices. Consequently, error accumulation across the full domain is avoided, and the noise amplification typically associated with numerical differentiation is substantially mitigated.
\end{remark}

\begin{remark}The truncation order $S$ should ideally reflect the regularity of the data (see \cref{sec:apdxS} for guidelines); however, in all cases considered here, $S=1$ was sufficient. For an $l$-th order derivative, a higher finite difference order $\mu_l$ requires more points - specifically, $\mu_l + l$ points - which increases variance. To balance accuracy and robustness, we set $\mu_l = 2S - l + 1$, resulting in a total of $2d(2S+1)$ noisy data points involved in finite difference approximations. 
\end{remark}

\subsection{Demonstration of Optimal Radius for Piecewise Polynomial Test Functions}\label{sec:OptRad4PWPolyTF}

We now demonstrate the effectiveness of the proposed error estimator from \cref{prop:eint_hat} by comparing the estimated integration error to the true error on two systems. To establish a ground truth benchmark, we define the true integration error norm as
\begin{equation}
\erm(r) := \frac{1}{\sqrt{K}}\norm{\ebf_{\text{int}}(r)}_{F} = \frac{1}{\sqrt{K}}\norm{\Gbfstar(r) \Wbfstar - \Bbfstar(r)}_{F},
\label{eq:e}
\end{equation}
where $\ebf_{\text{int}}(r)$ is the truncation error incurred by applying the trapezoidal rule to the weak-form integrals.

\begin{figure}[htbp]
  \centering
  \begin{subfigure}[b]{0.49\textwidth}
    \includegraphics[width=\textwidth]{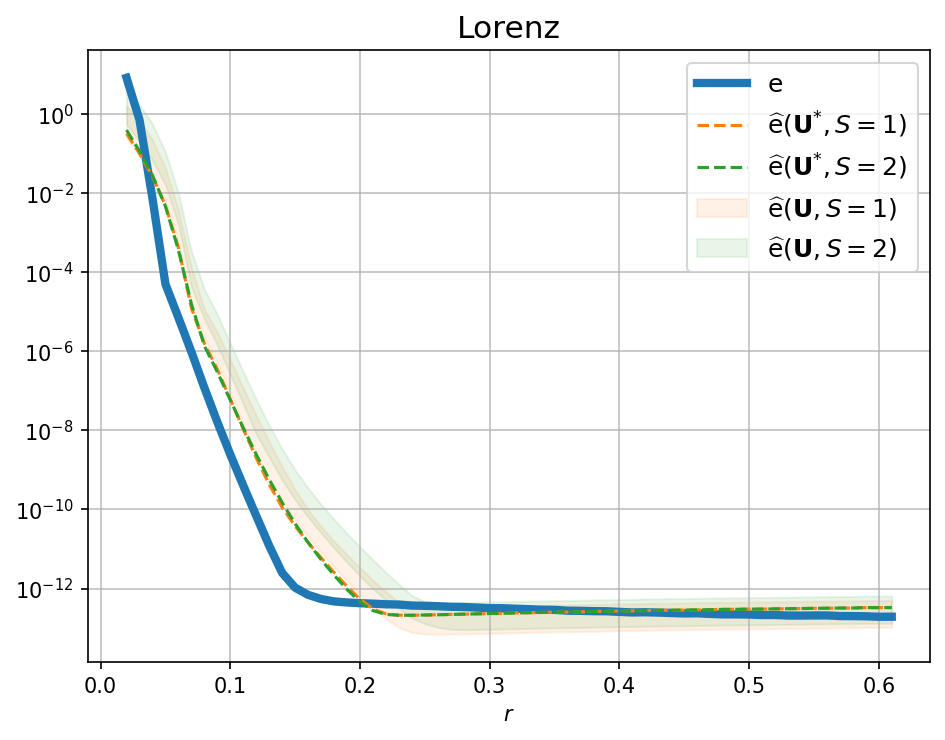}
  \end{subfigure}
  \hfill
  \begin{subfigure}[b]{0.49\textwidth}
    \includegraphics[width=\textwidth]{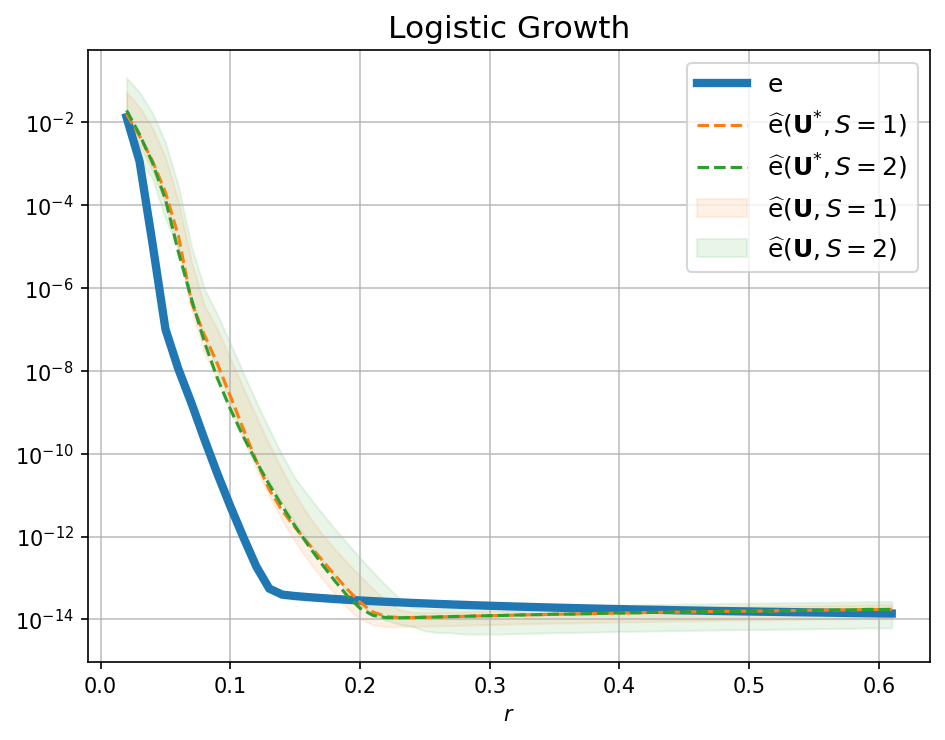}
  \end{subfigure}
  \caption{Comparison of the true integration error $\erm$ \eqref{eq:e} and the approximated error $\widehat{\erm}$ \eqref{eq:ehat_U} for the Lorenz system (left) and logistic growth model (right), with $T= 10$ and $\dt = 0.01$ for both cases. Each plot includes results from noise-free data as well as the range of estimates from $50$ realizations with $20\%$ noise.  For both cases, piecewise polynomial test functions with order $p=16$ are used. The approximated error $\widehat{\erm}$ is computed using truncation orders $S = 1$ (orange) and $S=2$ (green). In both plots, the solid blue line represents the true integration error $\erm$, dashed lines show $\widehat{\erm}$ computed from noise-free data $\Ubfstar$, and the shaded region captures the range of $\ermhat$ across realizations of noisy $\Ubf$. Across both examples, $\widehat{\erm}$ is insensitive to the choice of $S$ and $\widehat{\erm}(\Ubfstar)$ lies closely to $\widehat{\erm}(\Ubf)$. Notably, the changepoint in $\widehat{\erm}(r)$ consistently appears to the right of the changepoint in $\erm(r)$, where the true integration error remains small. This suggests that choosing the radius based on the change point of $\widehat{\erm}$ reliably yields low integration error.}
  \label{fig:eehat}
\end{figure}

\Cref{fig:eehat} shows the true integration error $\erm(r)$ and the estimated error $\widehat{\erm}(r)$ as functions of the test function radius $r$, plotted on a logarithmic scale. For this demonstration, we use the piecewise polynomial test functions as defined in \eqref{eq:tf_poly_main}. As expected, the true error $\erm(r)$ decreases with increasing $r$ and levels off after some radius. In practice, estimating this behavior using only noisy data is nontrivial, especially without access to the true parameters or noise-free observations. Nevertheless, across both the Lorenz and logistic growth systems, \cref{fig:eehat} illustrates that the estimated error $\widehat{\erm}(r)$ remains stable under different truncation orders $S$ and in the presence of noise. In all cases, the changepoint of $\widehat{\erm}(r)$ consistently lies within a region where the true integration error $\erm(r)$ is already small.


\cref{conj:optimal_radius} is supported by \cref{fig:params_error}, which shows that the observed minima in parameter error (scatter plot markers) lie close to the detected change point of $\erm(r)$ (blue vertical dashed line). One could attempt to formalize this by minimizing a proxy for mean-squared parameter error, 
$$\text{MSE}(\wbf, \wstar) = \mathbb{E}\brackets{\norm{\wbf - \wstar}^2} = \parens{\text{Bias}(\wbf, \wstar)}^2 + \text{Tr}(\text{Cov}(\wbf)),$$
via an optimization or grid search formulation, however, this would generally require access to ground truth parameter $\wstar$ or noise-free data $\ustar$, which are unavailable and beyond the current scope.
For now, we adopt this conjecture as a practical criterion since the change points of $\erm(r)$ and $\ermhat(r)$, are close (as shown in \cref{fig:eehat}). We choose $\rchat$ (defined as the change point of $\ermhat(r)$) as the reference test function radius. and construct the test function set $\Kcalbf$ according to \cref{def:tf_set}. A formal justification of the conjecture is left for future work.

\begin{figure}[htbp]
  \centering
\includegraphics[width=\textwidth]{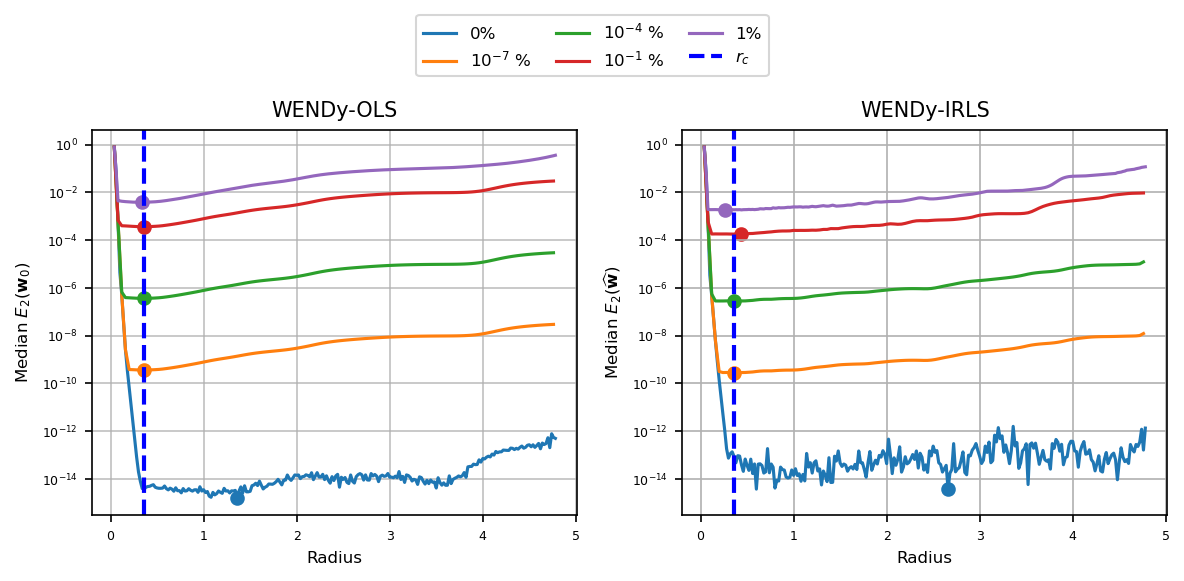}
  \caption{Mean parameter errors for the Lorenz system as a function of radius under WENDy-OLS (left) and WENDy-IRLS (right). Piecewise polynomial test functions with $p=16$ are used, with temporal resolution $M=500$. Each scatter point marks the minimum of the corresponding curve, while the blue dashed vertical line indicates the detected change point $r_c$ of $\erm(r)$. The minima align closely with the identified change point.}
\label{fig:params_error}
\end{figure}

We conclude this section by summarizing the proposed method in \cref{alg:SL} to compute the critical radius $\rchat$ and construct the test function matrices $\Phibf$ and $\Phidotbf$. This algorithm takes as input a reference test function $\psi$ with a fixed secondary parameter
(either the order 
$p$ for the piecewise polynomial or the shape parameter $\eta$ for the $C^\infty$ bump functions). The user must also specify the truncation order $S$, the finite difference order $\mubf$, and details about test function centers. This construction strategy is referred to as the \emph{Single-scale-Local (SL)} method, in contrast to the \emph{Multi-scale-Global (MG)} approach described in \cref{sec:MinimumRadiusSelection}. The full procedure is summarized in \cref{alg:SL}.

\begin{algorithm}
\caption{Single-scale-Local Test function set construction}
\label{alg:SL}
\begin{algorithmic}
\STATE \textbf{Input:} Data $\Ubf$, Reference test function $\psi$, Truncation order $S$, Finite Difference orders $\boldsymbol{\mu}$
\STATE \textbf{Output:} Test function matrices $\Phibf$, $\Phidotbf$
\vspace{0.5em}
\STATE Precompute the Fourier integral approximation $\Ibf(\Ubf_i, S, \boldsymbol{\mu})$ for each variable $i = 1, \dots, d$. 
\STATE For each grid length $m = 2, \dots, \lfloor M/2 \rfloor$ corresponding to candidate radius $r = m \dt$:
\begin{itemize}
  \item Construct $\Psihatbf(r)$.
  \item For each $\Ubf_i$, compute  $\widehat{\ebf}_{\text{int}}(\Ubf_i, r) = \Pbf \mathcal{F}\brackets{\Psihatbf(r) \Ibf(\Ubf_i, S, \mubf )}$ such that 
  $$
\Ibf(\Ubf_i, S, \mubf ) = \Ubf_i|_0^T + \sum_{s=1}^{S} \dt^{2s} \frac{B_{2s}}{(2s)!} \sum_{l=0}^{2s} \binom{2s}{l} \dbf^{2s-l} \mathcal{D}^{l}_{\mu_l}[\Ubf_i]|_0^T.
$$
Concatenate over $i$ to form  $\widehat{\ebf}_{\text{int}}(r)$.
\item Compute $\widehat{\erm}(r) := \frac{1}{\sqrt{K(r)}} \norm{\widehat{\ebf}_{\text{int}}(r) }_2$.
\end{itemize}
\vspace{0.5em}
\STATE Identify the critical radius $\rchat := \text{changepoint}(log(\widehat{\erm}(r))$ using \cref{alg:changepoint}.
\RETURN $\Phibf(\rchat), \Phidotbf(\rchat)$
\end{algorithmic}
\end{algorithm}

\section{Piecewise Polynomial Hyperparameters Selection}
\label{sec:poly_tf_hypeparams}

Throughout this paper, we adopt the piecewise polynomial reference\footnote{For more details about the reference test functions, see \cref{sec:reference_tf}.} test function defined by 
\begin{equation}
  \psi(t;r, p) =
\begin{cases}
     C(r-t)^p(r+t)^p \quad & t \in [-r, r],\\
     \quad  \quad 0 \quad & \text{otherwise},
     \label{eq:tf_poly_main}
\end{cases}
\end{equation} 
with an $L_2$ normalization\footnote{This normalization mirrors that of the $C^\infty$ reference function to ensure compatibility with the minimum radius selection algorithm in \cref{alg:MG}.}, i.e., $\norm{\psi(t)}_{L_2} = 1$. This choice is motivated by the fact that the Fourier transform of $\psi$ admits a closed-form expression, as shown in \cref{prop:psi_hat} below. Having an analytical formula for the Fourier coefficients enables a more accurate integration error approximation $\widehat{\ebf}_{\mathrm{int}}(\ustar, S, \boldsymbol{\mu}) := \frac{1}{\sqrt{T}} \Pbf \Fbf \Psihatbf \Ibf(\ustar, S, \boldsymbol{\mu})$, discussed in \cref{prop:eint_hat}, without introducing additional discretization error from constructing the diagonal matrix $\Psihatbf$. 


For this reason, we focus on the piecewise polynomial test function family throughout the remainder of the paper. We begin by presenting the closed-form expression for the Fourier coefficients of the reference test function $\psi$ in \cref{prop:psi_hat}, which is used to construct the diagonal matrix $\Psihatbf(r)$ in \cref{alg:SL}. We then turn to an analysis of the polynomial order in \cref{sec:poly_tf_order}, offering general guidelines for its selection. The section concludes with a demonstration of the effectiveness of our hyperparameter selection procedure.

\begin{proposition}
\label{prop:psi_hat}
Let $\psi(t;r, p)$ be defined as in \eqref{eq:tf_poly_main}. The Fourier coefficients $\widehat{\psi}_n$, for $n \in \Zbb$, are real-valued and symmetric, i.e., $\widehat{\psi}_n = \widehat{\psi}_{-n}$, and given explicitly by
\begin{equation*}
\widehat{\psi}_n = 
    \begin{cases}
        \frac{2C}{\sqrt{T}} r^{2p+1} \sum_{j=0}^p \binom{p}{j}\frac{(-1)^j}{2j+1} \quad & n = 0,\\
        \frac{C}{\sqrt{T}} \sqrt{\pi} \big(\frac{rT}{n\pi} \big)^{p+\frac{1}{2}} \Gamma(p+1) J_{p + \frac{1}{2}}\big( \frac{2\pi nr}{T}\big)& n \geq 1,
    \end{cases}
\end{equation*}
where $\Gamma$ denotes the Gamma function and $J_{\nu}$ is the Bessel function of the first kind.
\end{proposition}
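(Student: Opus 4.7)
The plan is to exploit the factorization $(r-t)^p(r+t)^p = (r^2-t^2)^p$, which reveals $\psi$ as an even function of $t$. This immediately yields the reality and symmetry claims ($\widehat{\psi}_n = \widehat{\psi}_{-n} \in \Rbb$) since $\psi$ even implies the imaginary (sine) part of $\int \psi(t) e^{-2\pi i n t/T} dt$ vanishes by odd symmetry, and the cosine is even in $n$. After this observation, it remains to evaluate the real integral
\begin{equation*}
  \widehat{\psi}_n = \frac{C}{\sqrt{T}} \int_{-r}^{r} (r^2 - t^2)^p \cos\!\parens{\tfrac{2\pi n t}{T}} dt
\end{equation*}
in closed form for $n=0$ and $n \geq 1$ separately.

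For $n=0$, I would apply the binomial theorem to expand $(r^2 - t^2)^p = \sum_{j=0}^p \binom{p}{j} (-1)^j r^{2p-2j} t^{2j}$ and integrate term-by-term on $[-r,r]$, using $\int_{-r}^r t^{2j}\,dt = 2 r^{2j+1}/(2j+1)$. Pulling $r^{2p+1}$ out of the sum yields the stated formula.

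For $n \geq 1$, the key ingredient is the Poisson integral representation of the Bessel function of the first kind: for $\mathrm{Re}(\nu) > -1/2$,
\begin{equation*}
  J_\nu(z) \;=\; \frac{(z/2)^\nu}{\sqrt{\pi}\,\Gamma(\nu + 1/2)} \int_{-1}^{1} (1-s^2)^{\nu-1/2} e^{izs}\, ds.
\end{equation*}
I would apply this with $\nu = p + 1/2$ (so that the exponent $\nu - 1/2 = p$ matches the weight $(1-s^2)^p$) and with $z = \omega r$, where $\omega := 2\pi n/T$. After the change of variable $s = t/r$ (giving $dt = r\, ds$ and a factor of $r^{2p}$ from $(r^2 - t^2)^p = r^{2p}(1-s^2)^p$), the integral $\int_{-r}^{r} (r^2-t^2)^p e^{-i\omega t}\,dt$ reduces to $r^{2p+1} \int_{-1}^{1}(1-s^2)^p e^{-i\omega r s}\,ds$. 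Solving the Poisson identity for this integral and simplifying $r^{2p+1}\cdot 2^{p+1/2}\,(\omega r)^{-(p+1/2)} = (2r/\omega)^{p+1/2} r^{2p+1-(p+1/2)}\cdot r^{-(p+1/2)}\cdot r^{p+1/2}$, then substituting $\omega = 2\pi n/T$ so that $2r/\omega = rT/(n\pi)$, produces the claimed expression.

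The main obstacle is less any estimate than the clean bookkeeping of powers of $r$, $\omega$, and $2$ when unwinding the Poisson formula; getting $(rT/(n\pi))^{p+1/2}$ to come out exactly requires care that the $2^{p+1/2}$ from $(z/2)^\nu$ combines correctly with the substitution Jacobian and the overall factor of $r^{2p+1}$. Beyond that, the proof is a direct invocation of a standard Bessel function identity applied to an even weight on a symmetric interval.
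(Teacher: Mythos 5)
Your proposal is correct and follows essentially the same route as the paper: both exploit evenness of $(r^2-t^2)^p$ to reduce to a real cosine integral, handle $n=0$ by binomial expansion, and for $n\geq 1$ invoke the same Bessel identity — the paper cites Gradshteyn–Ryzhik 3.771.8, which is exactly the Poisson integral representation you use after the substitution $s=t/r$. The power-of-$r$ bookkeeping you flag does work out cleanly to $(rT/(n\pi))^{p+1/2}$, so there is no gap.
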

\begin{proof}
    This is a straightforward application of a Fourier Tranform. For the convenience of the reader, we have included a complete proof in \cref{sec:psi_hat}.
\end{proof}

\subsection{Order Selection}
\label{sec:poly_tf_order}
In this subsection, we discuss the test function order $p$ by analyzing the integration error $\ebfint(\phi, \ustar)$, as defined in \eqref{eq:eint_u}. Here, $\phi(t)$ is a translation of the piecewise polynomial reference test function $\psi$ introduced in \eqref{eq:tf_poly_main}. This analysis parallels results established in Lemma 2.3 of \cite{MessengerBortz2021MultiscaleModelSimul}, which we revisit here for completeness and to emphasize its relevance in the present context.

The integration error due to trapezoidal discretization is
\begin{equation*}
    \begin{aligned}
         \ebfint(\phi, \ustar)& = \dt \sum_{m=0}^{M}{}'' \phi(t_m) \dot{\ustar}(t_m) + \phidot(t_m) \ustar(t_m) \\
        & =         
         \dt \sum_{m=0}^{M}{}'' 
         \frac{d}{dt} \brackets{\phi\ustar}(t_m),
    \end{aligned}
\end{equation*}
where the notation $\sum{}''$ indicates that the first and last terms are halved before entering the sum. Applying the Euler–Maclaurin formula \eqref{eq:EulerMaclaurin} to $\int_0^T \frac{d}{dt}\brackets{\phi \ustar}dt$ yields
\begin{equation*}
    \begin{aligned}
         \ebfint(\phi, \ustar) = \sum_{s=1}^{\infty} \dt^{2s} \frac{B_{2s}}{(2s)!} \sum_{l=0}^{2s}\binom{2s}{l}  \evalbar{ \parens{ \phi^{(l)} u^{\star (2k-l)}}}{0}^{T}.
    \end{aligned}
\end{equation*}
Because $\phi_k^{(l)}(0) = \phi_k^{(l)}(T) = 0 $ for $0 \leq l \leq p-1$, the lower-order boundary terms in the Euler–Maclaurin expansion vanish.  As a consequence, when $p$ is odd, the leading order integration error scales like $\mathcal{O}(\dt^{p+1})$, while for even $p$ it scales like $\mathcal{O}(\dt^{p})$. In both cases, increasing the order $p$ leads to improved accuracy, with errors potentially reaching machine precision for sufficiently high $p$.  This leads to a practical guideline: the order $p$ can be selected based on the time step $\dt$ to control the integration error. In all of the problems we consider, choosing $p$ between $16$ and $22$ provides sufficiently accurate results. While determining an optimal value of $p$ is outside the scope of this work, our experiments indicate that performance remains stable across a wide range of $p$.

We conclude this subsection by summarizing the recommended strategy for selecting hyperparameters for the piecewise polynomial test functions. One first selects an order $p$, then applies \cref{alg:SL} to compute the corresponding critical radius $\rchat$ and construct the test function matrices accordingly. The effectiveness of this procedure is illustrated in \cref{fig:e_ps} below.

\begin{figure}[htbp]
  \centering
    \includegraphics[width=0.8\textwidth]{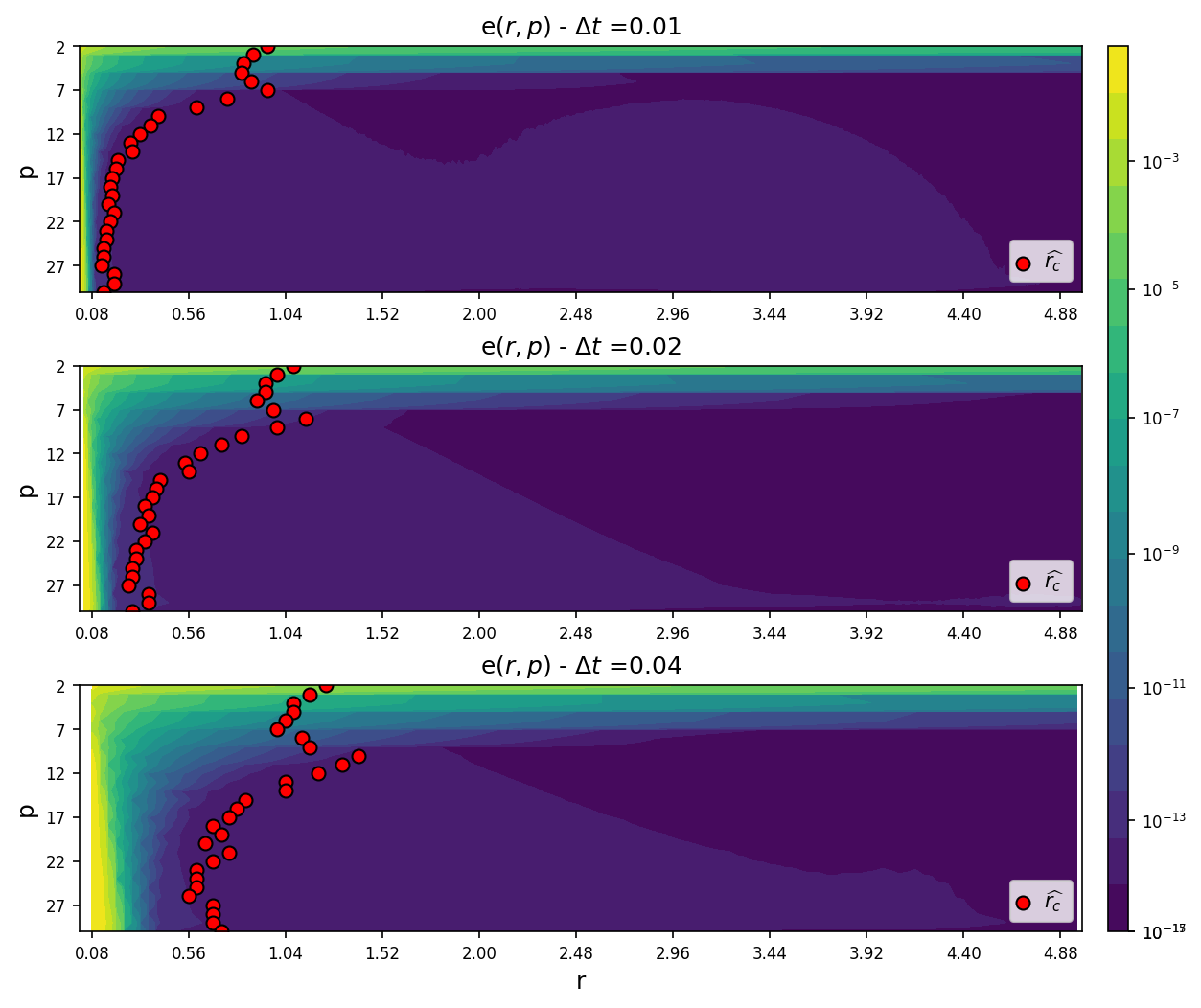}
  \caption{Integration error $\erm(r, p)$  \eqref{eq:e} for the logistic growth model, plotted on a logarithmic scale as a function of test function radius $r$ and order $p$, for $\dt = 0.01$, $0.02$, and $0.04$. The red scatter markers indicate the critical radius $\rchat$ identified by \cref{alg:SL}. Across all resolution levels, the integration error decreases toward machine precision as the order $p$ increases. For sufficiently large $p$, the critical radius $\rchat$ lies within the region of minimal integration error.}
  \label{fig:e_ps}
\end{figure}

In \cref{fig:e_ps}, we plot the integration error $\erm(r, p)$ \eqref{eq:e} as a function of the test function radius $r$ and order $p$, for time step $\dt = 0.01$, $0.02$, and $0.04$, corresponding to $M = 1000, 500$ and $250$, respectively. In all cases, we observe that for fixed values of $r$, the integration error $\ebf$ decreases rapidly with increasing $p$, until reaching machine precision. Additionally, consistent with the behavior described in \cref{sec:radiusselection}, we see that for fixed $p$, increasing $r$ also reduces the integration error. As $\dt$ increases, higher values of $p$ are required for the integration error to decay to machine precision, reflecting the fact that the error scales at least as $\mathcal{O}(\dt^{p})$. We also emphasize that in the presence of measurement noise, the integration error becomes negligible compared to the noise level, as shown in \cref{sec:results}. In such cases, using very high $p$ may have little effect on the final parameter estimates, since the dominant error source is no longer the numerical integration.
Finally, in \cref{fig:e_ps}, we also plot the optimal radius identified using \cref{alg:SL}. For sufficiently large $p$, the critical radius $\rchat$ consistently lies within the region of minimal integration error.

\section{Results}

\label{sec:results}
In this section, we evaluate the effectiveness of our algorithm for selecting test function hyperparameters by analyzing how the WENDy framework performs as a function of the test function radius and polynomial (see \cref{sec:result_Sl}). In addition, in \cref{sec:compareMG},  we compare two strategies for constructing the test function set $\Kcalbf$: the Single-scale-Local (SL) approach described in \cref{alg:SL}, and  Multi-scale-Global (MG) approach from \cref{alg:MG}.

Both objectives are explored in two regression settings: Ordinary Least Squares (WENDy-OLS) (see \cref{sec:weakform}) and Iteratively Reweighted Least Squares (WENDy - IRLS) (see \cref{sec:WENDy}).  Results are demonstrated on four canonical ODE systems: logistic growth, Duffing oscillator, Lorenz system, and FitzHugh–Nagumo. The governing equations and parameter values used in these experiments are summarized in \cref{table:eq}.

To assess performance, we perform $100$ independent trials for each setting. In each trial, Gaussian noise with standard deviation $\sigma = \sigma_{NR}\norm{ \Ubfstar}_F$ is added to the data, where $\sigma_{NR}$ denotes the noise ratio. We evaluate two error metrics: the relative coefficient error of the IRLS estimate $E_2(\widehat{\wbf})$ and the relative coefficient error of the OLS estimate $E_2(\wbf_0)$.
These error metrics are defined as follows:
\begin{equation}
\begin{aligned}
    E_2(\widehat{\wbf}) := \frac{\norm{\widehat{\wbf} - \wbf^{\star}}_2}{\norm{\wbf^{\star}}_2}, \quad   E_2(\wbf_0) := \frac{\norm{\wbf_0 - \wbf^{\star}}_2}{\norm{\wbf^{\star}}_2}, 
\end{aligned}
\label{eq:metric}
\end{equation}
where $\wstar$ denotes the true parameter vector: $\wstar:= \textsf{vec}(\Wbfstar)$.

\begin{table}[h!]
\centering
 \begin{tabular}{l@{\hspace{0.5em}}c@{\hspace{0.5em}}c}
\toprule
\addlinespace
 \textbf{Equation} & \textbf{ODE} & \textbf{Parameters}\\ 
 \addlinespace
 \midrule
 \addlinespace
  \begin{tabular}{@{}c@{}} \textbf{Logistic} \\ \textbf{Growth} \end{tabular} & $\dot{u} = w_1u + w_2u^2$ & \begin{tabular}{@{}c@{}} $T = 10$ \\ $u(0) = 0.01$ \\ $\wbf^{\star} = [1, -1]^T$   \end{tabular} \\
\addlinespace
 \midrule
 \addlinespace
 \textbf{Duffing} & $\begin{cases} \dot{u}_1 &= w_1u_2 \\ \dot{u}_2 &= w_2u_2 + w_3u_1 + w_4u_1^3 \end{cases}$ & \begin{tabular}{@{}c@{}} $T = 20$ \\ $u(0) = [0, 2] $ \\ $\wbf^{\star} = [1, -0.2, -0.05, -1]^T$  \end{tabular} \\
 \addlinespace
 \midrule
 \addlinespace
 \addlinespace
 \begin{tabular}{@{}c@{}} \textbf{FitzHugh} \\ \textbf{- Nagumo} \end{tabular} & $\begin{cases} \dot{u}_1 &= w_1u_1 + w_2u_1^3 + w_3u_2 \\ 
 \dot{u}_2 &= w_4u_1 + w_5 + w_6u_2 \end{cases}$ 
 &  \begin{tabular}{@{}c@{}} $T = 25$ \\ $u(0) = [0, 0.1] $ \\ $\wbf^{\star} = [3, -3, 3, -\frac{1}{3}, \frac{17}{150}, \frac{1}{15}]^T$  \end{tabular}\\
 \addlinespace
 \midrule
 \addlinespace
  \textbf{Lorenz} & $\begin{cases} \dot{u}_1 &= w_1u_2 + w_2u_1 \\ 
 \dot{u}_2 &= w_3u_1 + w_4u_1u_3 + w_5u_2 \\ 
 \dot{u}_3 &= w_6u_1u_2 + w_7 
 \end{cases}$ &  \begin{tabular}{@{}c@{}} $T = 10$ \\ $u(0) = [-8, 10, 27] $ \\ $\wbf^{\star} = [10, -10, 28, -1, -1, 1, -\frac{8}{3}]^T$  \end{tabular}\\
 \end{tabular}  
 \caption{Equations used in numerical experiments.}
 \label{table:eq}
\end{table}

\subsection{Effectiveness of the Single-scale-Local (SL) Approach}
\label{sec:result_Sl}
We first evaluate the effectiveness of the proposed algorithm for selecting the critical radius $\rchat$ by analyzing the performance of the WENDy framework across a range of radius values and noise levels. \Cref{fig:logistic_growth_rad,fig:duffing_rad,fig:lorenz_rad,fig:fitzhugh_nagumo_rad} show the median parameter error under WENDy-OLS (left), i.e., $E_2(\wbf_0)$, and WENDy-IRLS (right), i.e., $E_2(\what)$, as defined in \eqref{eq:metric}.  Results are shown for four canonical systems: the logistic growth equation, Duffing oscillator, Lorenz system, and FitzHugh–Nagumo model. Each plot displays results across nine different noise levels. Each plot displays performance across nine noise levels, with fixed resolution $M = 500$ and polynomial order $p = 16$. In each case, the dashed vertical line indicates the average critical radius $\rchat$ computed using \cref{alg:SL}.

\begin{figure}[htbp]
\centering
    \includegraphics[width=0.8\textwidth]{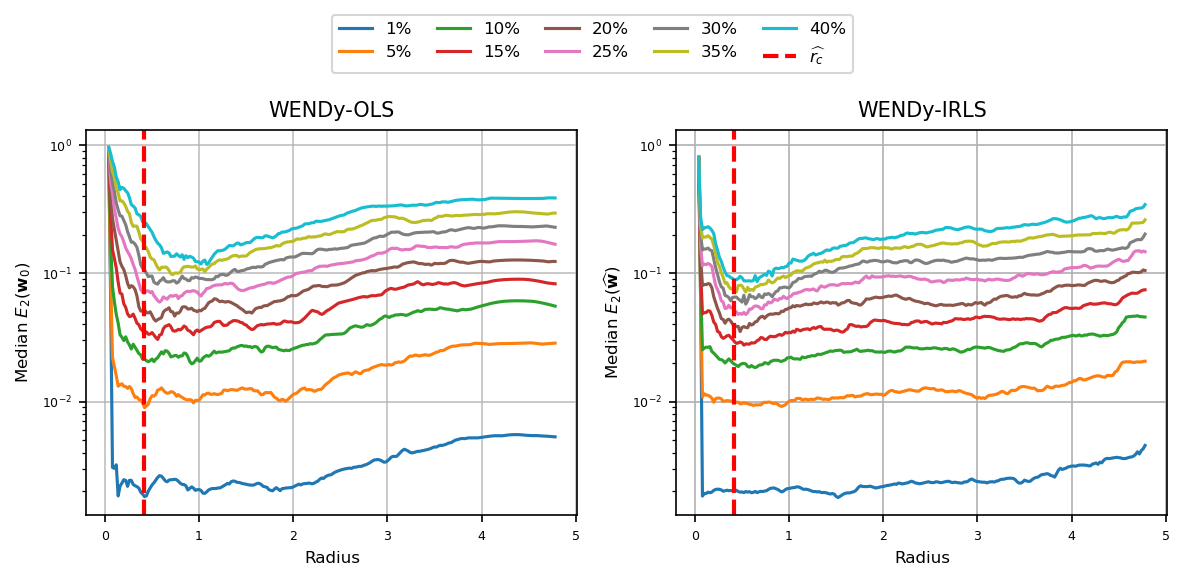}
  \caption{Logistic Growth: Parameter error under WENDy-OLS (left) and WENDy-IRLS (right) as a function of test function radius across varying noise levels. Median relative error is plotted against test function radius for $9$ noise levels, using polynomial order $p = 16$ and temporal resolution $M=500$. The dashed vertical line indicates the mean critical radius $\rchat$ obtained from \cref{alg:SL}. 
  }
\label{fig:logistic_growth_rad}
\end{figure}
\begin{figure}[htbp]
\centering
    \includegraphics[width=0.8\textwidth]{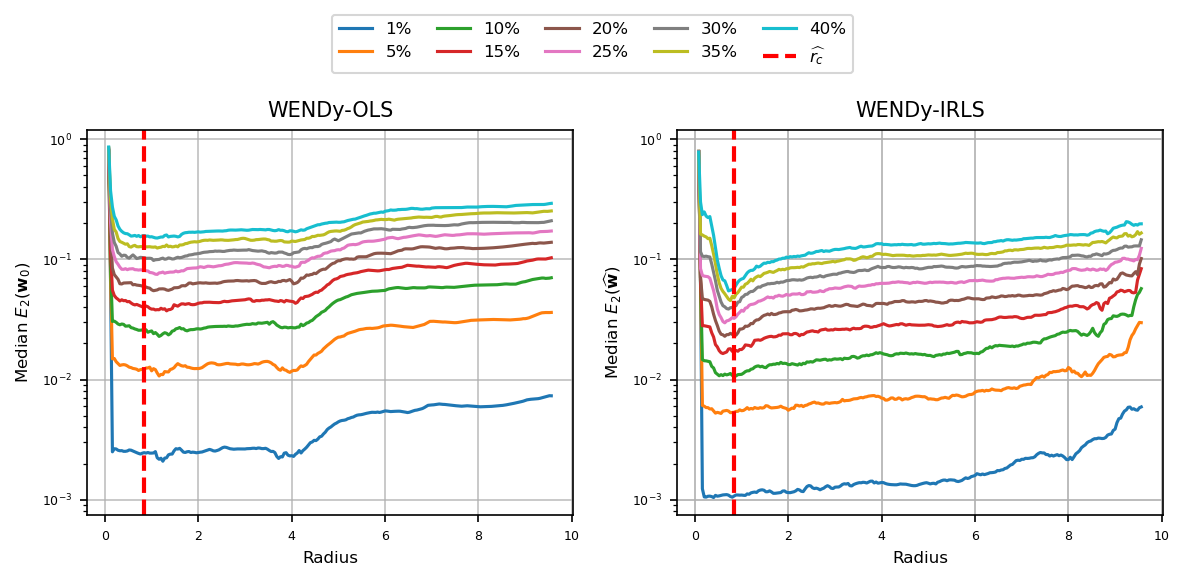}
  \caption{Duffing Equation: Parameter error under WENDy-OLS and WENDy-IRLS as a function of test function radius across varying noise levels. See \cref{fig:logistic_growth_rad} for plot details.}
\label{fig:duffing_rad}
\end{figure}
\begin{figure}[htbp]
\centering
    \includegraphics[width=0.8\textwidth]{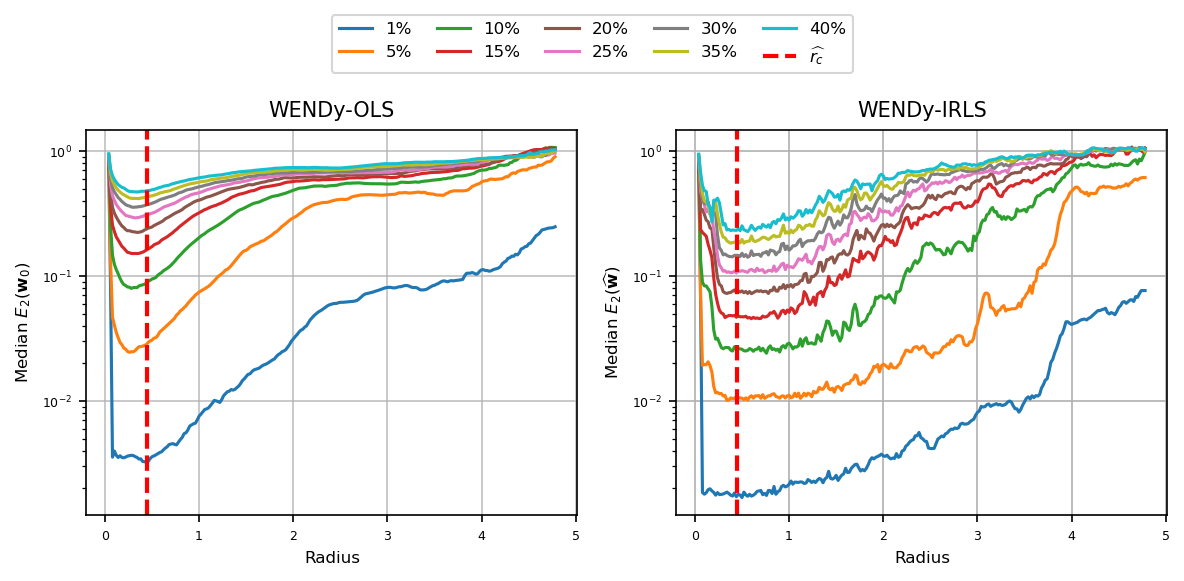}
  \caption{Lorenz Equation: Parameter error under WENDy-OLS and WENDy-IRLS as a function of test function radius across varying noise levels. See \cref{fig:logistic_growth_rad} for plot details.}
\label{fig:lorenz_rad}
\end{figure}
\begin{figure}[htbp]
\centering
    \includegraphics[width=0.8\textwidth]{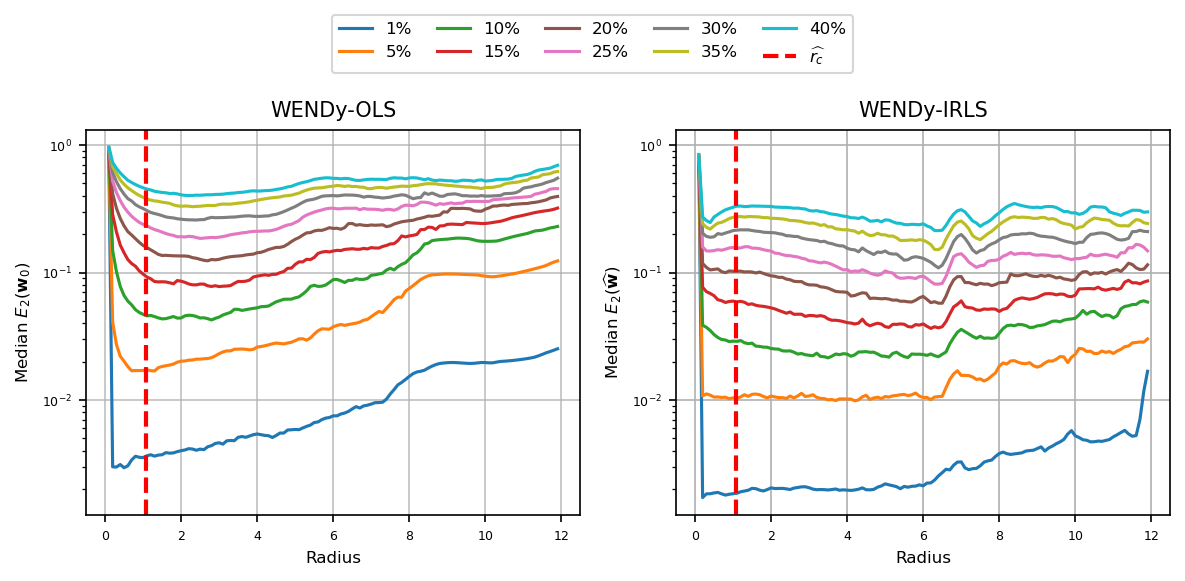}
  \caption{Fitzhugh-Nagumo Equation: Parameter error under WENDy-OLS and WENDy-IRLS as a function of test function radius across varying noise levels. See \cref{fig:logistic_growth_rad} for plot details.}
\label{fig:fitzhugh_nagumo_rad}
\end{figure}

In the OLS setting (left plots), the critical radius $\rchat$ aligns well with the region of minimal parameter error for both the Duffing equation (\cref{fig:duffing_rad}) and the Lorenz system (\cref{fig:lorenz_rad}) across all noise levels. For the logistic growth and FitzHugh-Nagumo systems (\cref{fig:logistic_growth_rad,fig:fitzhugh_nagumo_rad}), this only holds at lower noise levels (e.g., $1\%$ to $5\%$). As noise increases, the location of the minimum-error region shifts, due to the increased influence of nonlinear terms: $u^2$ in the logistic growth model and $u_1^3$ in the FitzHugh–Nagumo model.

In the IRLS setting (right plots), the critical radius $\rchat$ continues to identify the low-error region for both Duffing and Lorenz (\cref{fig:duffing_rad,fig:lorenz_rad}). Additionally, the bias introduced by the nonlinearity $u^2$ in logistic growth is successfully removed via the first-order covariance expansion in WENDy-IRLS, leading to strong alignment between $\rchat$ and the minimum-error region across all noise levels (\cref{fig:logistic_growth_rad}). However, the IRLS correction does not fully address the cubic nonlinearity $u_1^3$ in FitzHugh–Nagumo, resulting in reduced accuracy at higher noise levels (\cref{fig:fitzhugh_nagumo_rad}). 

We further investigate the performance of the SL approach across varying test function radii, polynomial orders, and temporal resolutions. In \cref{fig:logistic_growth_rad_p,fig:duffing_rad_p,fig:lorenz_rad_p}, we present the median parameter error under both WENDy-OLS (left columns) and WENDy-IRLS (right columns) for the logistic growth, Duffing, and Lorenz systems, respectively
In all figures, the top row corresponds to $M = 1000$ time points, while the bottom row shows results for $M = 500$. For the logistic growth and Duffing systems, a noise ratio of $20\%$ is used, while the Lorenz system is tested with $25\%$ noise. These plots also include red markers for the median critical radius $\rchat$ from \cref{alg:SL} and white markers for the median minimum radius $\rmin$ obtained via \cref{alg:MG}.

\begin{figure}[htbp]
\centering
    \includegraphics[width=\textwidth]{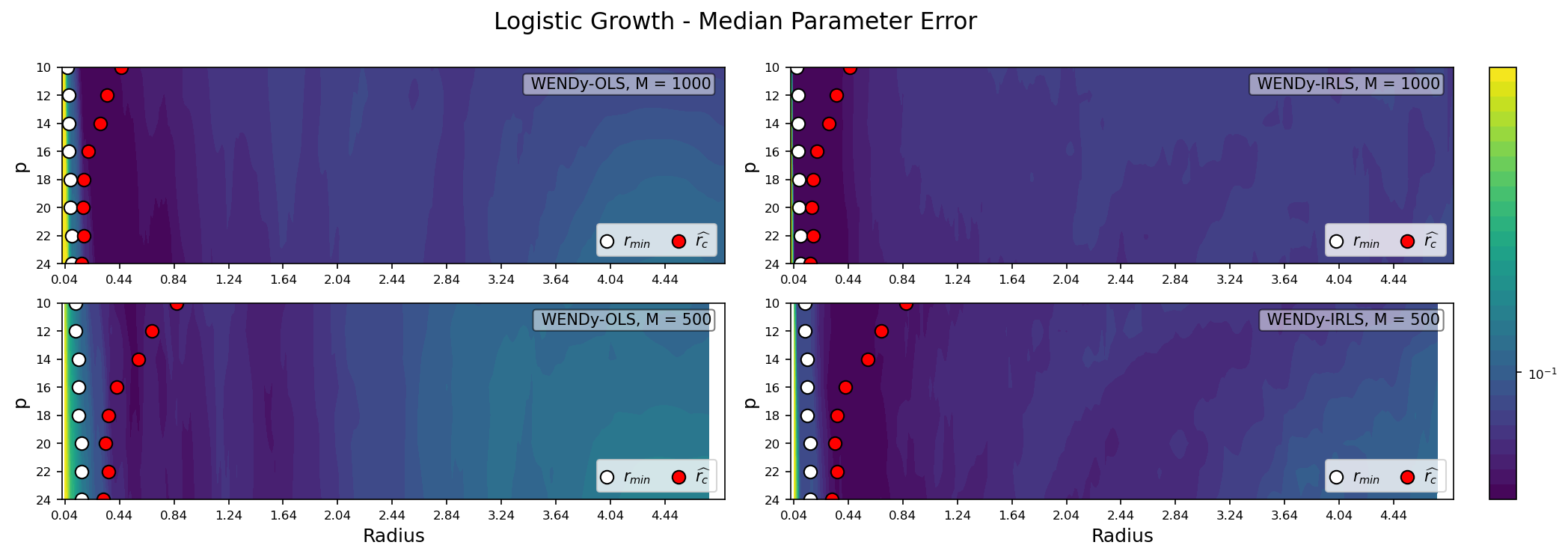}
  \caption{Logistic Growth: Parameter error across radius, order, and temporal resolutions under WENDy-OLS (left) and WENDy-IRLS (right). Median parameter error is shown as a function of test function radius $r$ and order $p$, evaluated at $2$ temporal resolutions: $M=1000$ (top row); $M=500$ (bottom row), with $20\%$ noise. Red markers indicate the median critical radius $\rchat$ computed using \cref{alg:SL}, while white markers denote the minimum radius $\rmin$ obtained via \cref{alg:MG}. 
  }
\label{fig:logistic_growth_rad_p}
\end{figure}
\begin{figure}[htbp]
\centering
    \includegraphics[width=\textwidth]{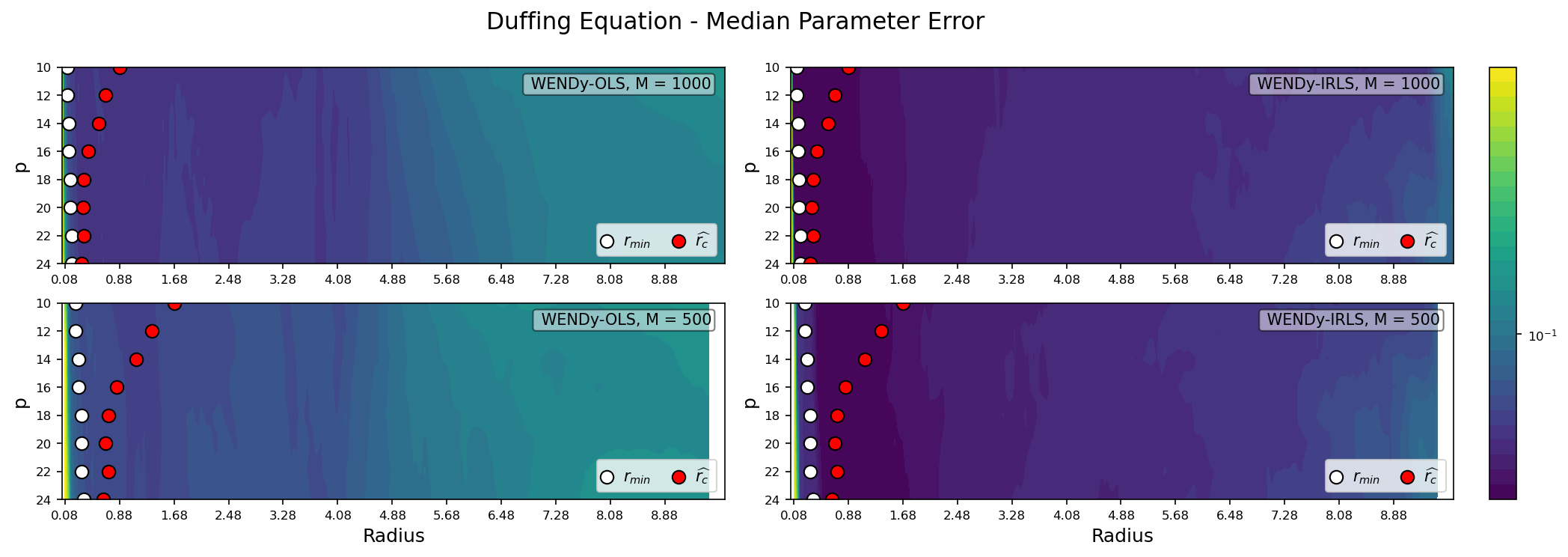}
  \caption{Duffing Equation: Parameter error across radius, order, and temporal resolutions under WENDy-OLS and WENDy-IRLS. See \cref{fig:logistic_growth_rad_p} for plot details.}
\label{fig:duffing_rad_p}
\end{figure}
\begin{figure}[htbp]
\centering
    \includegraphics[width=\textwidth]{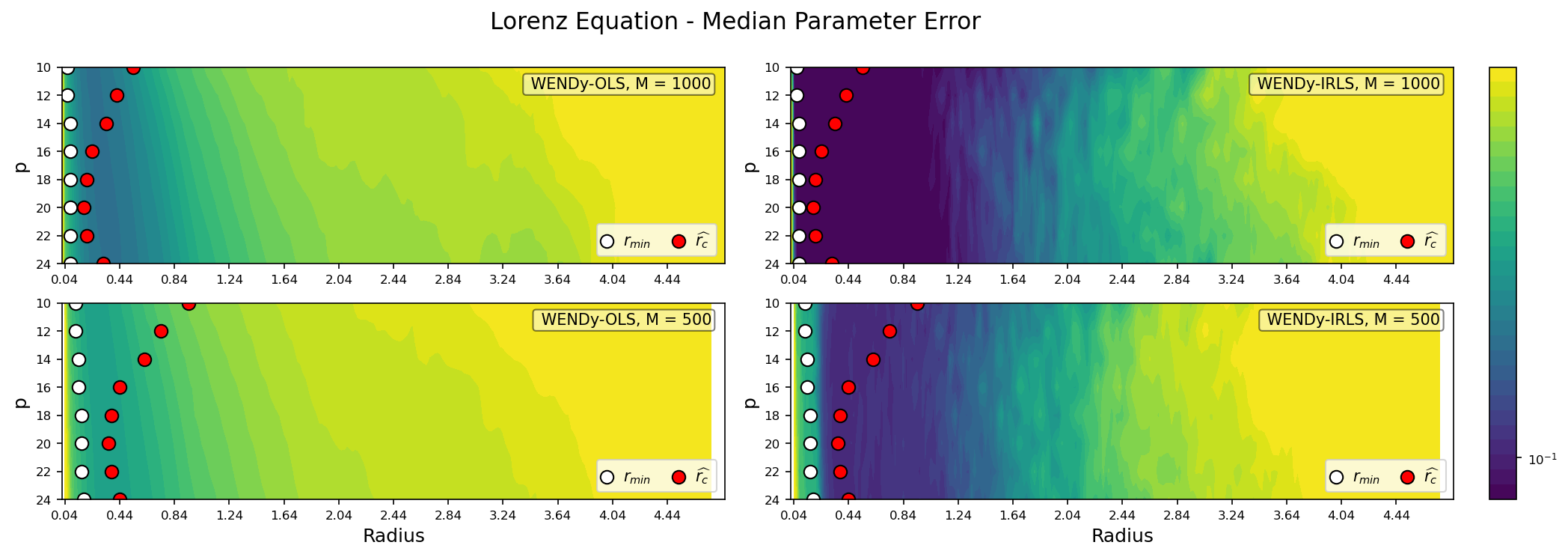}
  \caption{Lorenz Equation: Parameter error across radius, order, and temporal resolutions under WENDy-OLS and WENDy-IRLS. See \cref{fig:logistic_growth_rad_p} for plot details.}
\label{fig:lorenz_rad_p}
\end{figure}

As expected, for the Duffing and Lorenz systems (\cref{fig:duffing_rad_p,fig:lorenz_rad_p}), the red markers representing the critical radius $\rchat$ computed via \cref{alg:SL} consistently fall within regions of low parameter error across all polynomial orders $p$, under both regression schemes (OLS and IRLS), and for both resolution levels.
For the logistic growth system (\cref{fig:logistic_growth_rad_p}), $\rchat$ deviates from the low-error region in the OLS setting due to bias from the quadratic nonlinearity $u^2$. However, in the IRLS setting, where the covariance is successfully corrected, $\rchat$ aligns well with the region of minimal error. This demonstrates that the Single-scale-Local scheme performs effectively in identifying a reliable radius, particularly when paired with a covariance-correct regression scheme.

\subsection{Comparison with Multi-scale-Global (MG)}
\label{sec:compareMG}
In this subsection, we compare two strategies for constructing the set of test functions: the Multi-scale-Global (MG, \cref{alg:MG}) and Single-scale-Local (SL, \cref{alg:SL}) approaches. The comparison is based on the performance of the WENDy framework in terms of both parameter error and runtime. All computations were performed in parallel on the University of Colorado Boulder's Blanca Condo Cluster, with each trial executed on a single core of an Intel Xeon Platinum 8592+ CPU @ 2.90GHz.

\Cref{fig:logistic_growth_MGSL,fig:duffing_MGSL,fig:lorenz_MGSL,fig:fitzhugh_nagumo_MGSL} present scatter plots of parameter error versus runtime for WENDy-OLS and WENDy-IRLS applied to the logistic growth, Duffing, Lorenz, and FitzHugh–Nagumo systems, respectively. Each plot includes results for both the MG and SL test function set construction strategies with $p=16$, evaluated across two temporal resolutions ($M = 1000$ and $M = 500$) and three noise levels. Specifically, we test noise ratios of $10\%, 20\%$, and $40\%$ for the logistic growth and Duffing equations, and $10\%, 20\%$, and $30\%$ for the Lorenz and FitzHugh–Nagumo systems. Each configuration is run with $100$ independent WENDy realizations. The resulting scatter plots display four sets of points per plot (OLS-MG, OLS-SL, IRLS-MG, IRLS-SL), and white-filled markers indicate the median values for each method.

\begin{figure}[htbp]
\centering
    \includegraphics[width=\textwidth]{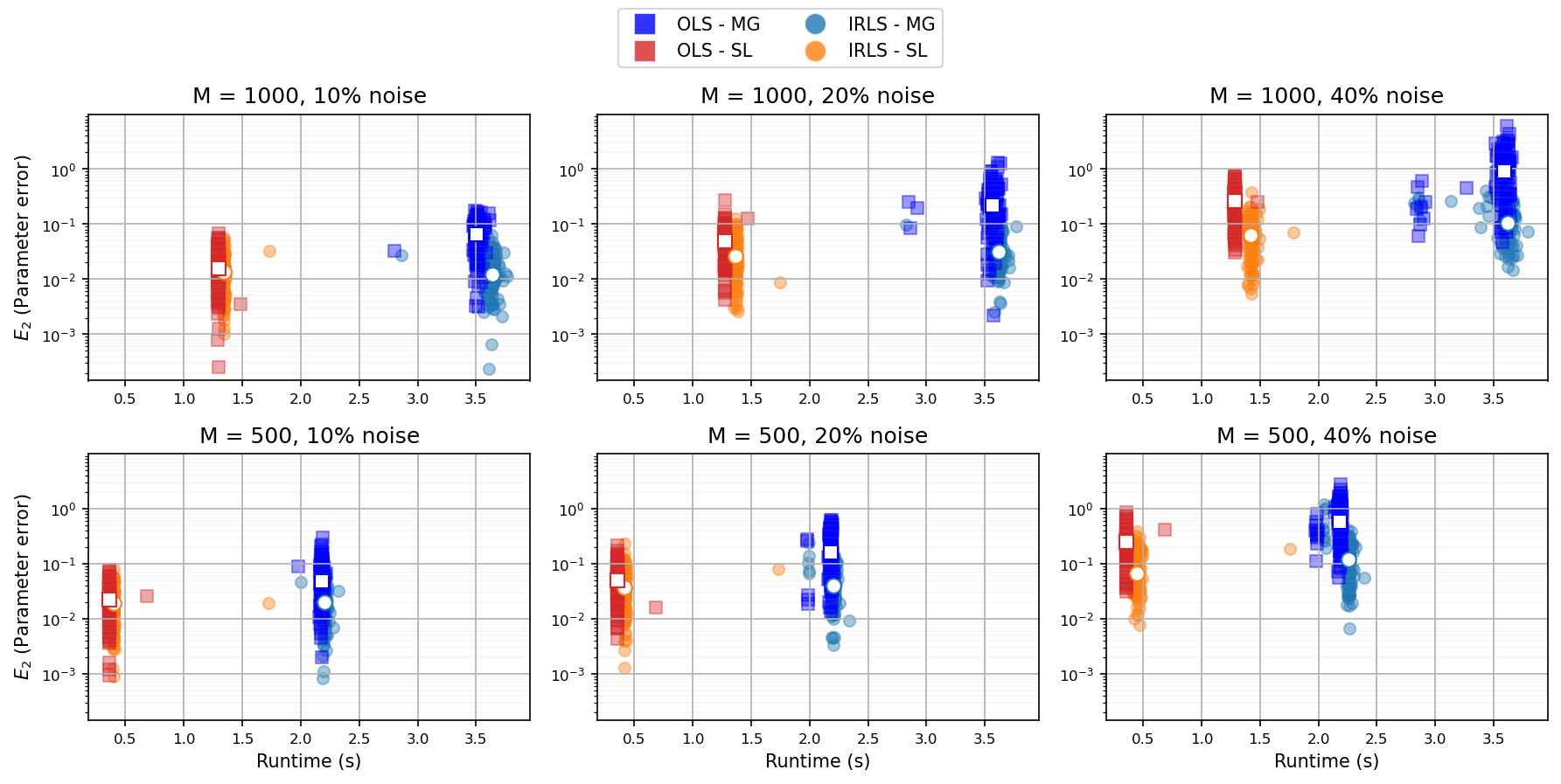}
  \caption{Logistic Growth: Comparison of parameter error and walltime between Multi-scale-Global (MG, \cref{alg:MG}) and Single-scale-Local (SL, \cref{alg:SL}) approaches across different noise levels and temporal resolutions. Top row: $M=1000$; bottom row: $M=500$. Columns correspond to noise levels of $10\%, 20\%$, and $40\%$. Each plot shows the parameter error and runtime of $100$ WENDy runs using OLS and IRLS under MG and SL, totaling four configurations per plot. 
  }
\label{fig:logistic_growth_MGSL}
\end{figure}
\begin{figure}[htbp]
\centering
    \includegraphics[width=\textwidth]{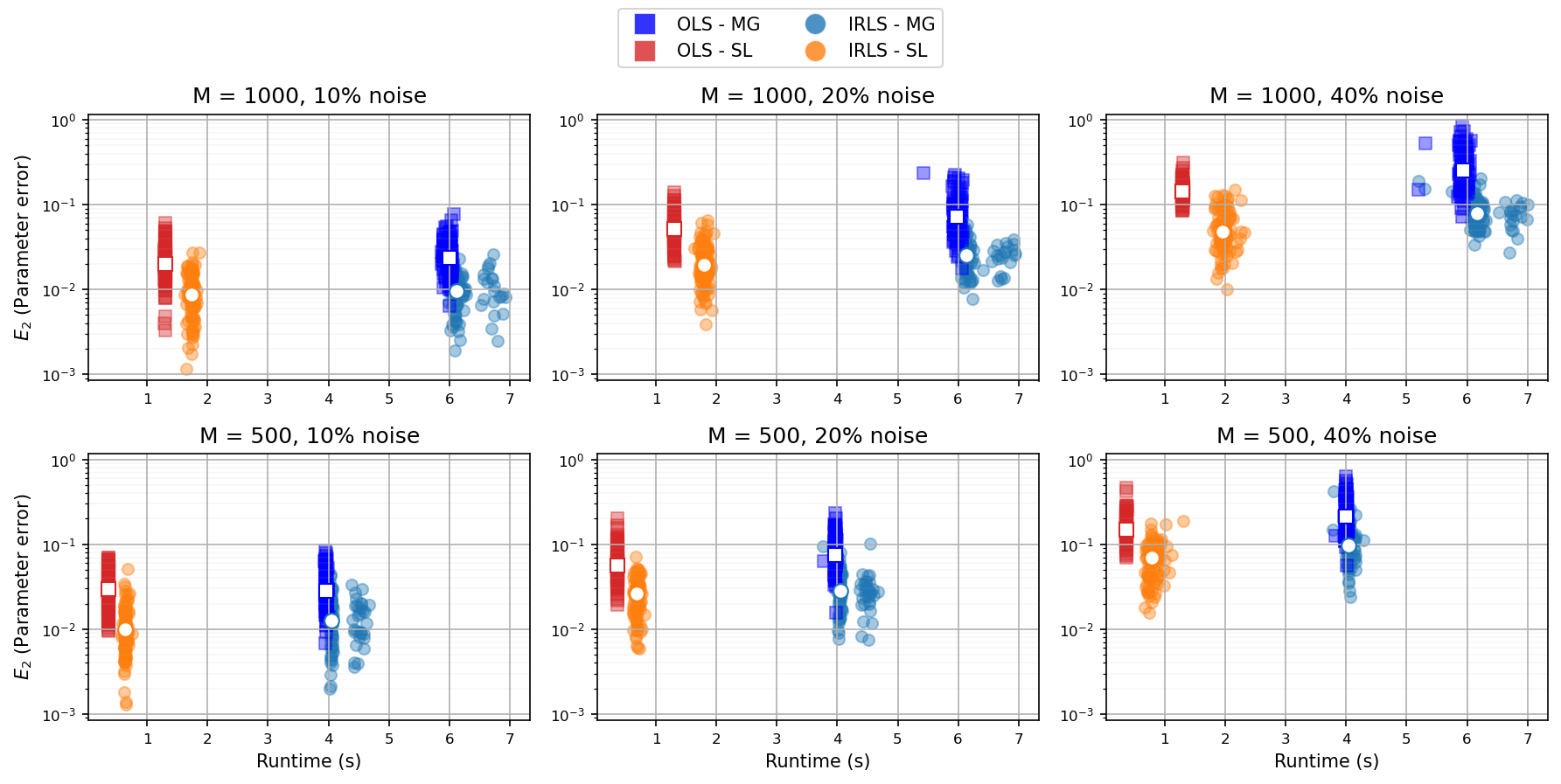}
  \caption{Duffing Equation: Comparison of parameter error and walltime between Multi-scale-Global (MG, \cref{alg:MG}) and Single-scale-Local (SL, \cref{alg:SL}) approaches across different noise levels and temporal resolutions. See \cref{fig:logistic_growth_MGSL} for more details.}
\label{fig:duffing_MGSL}
\end{figure}
\begin{figure}[htbp]
\centering
    \includegraphics[width=\textwidth]{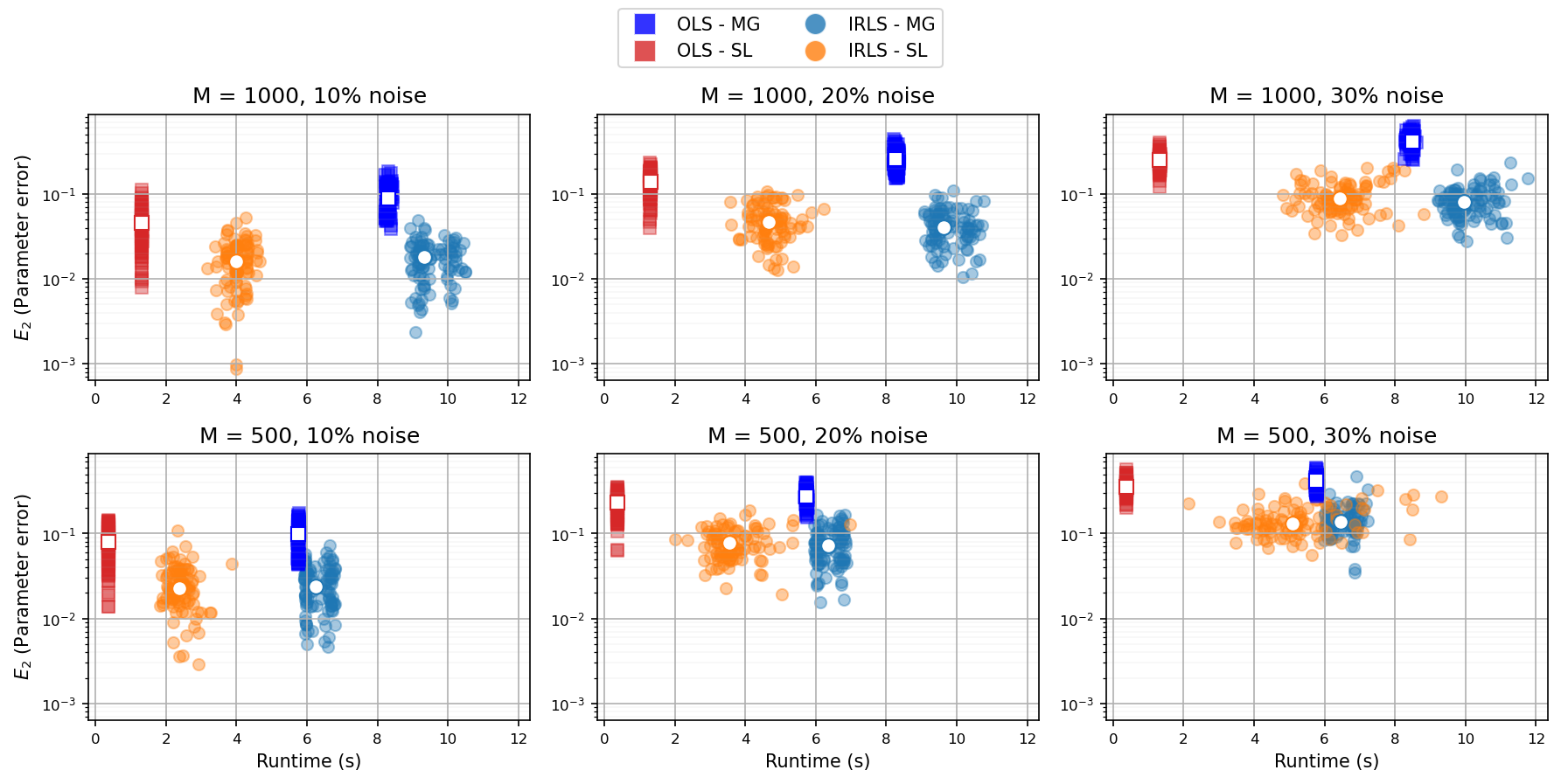}
  \caption{Lorenz Equation: Comparison of parameter error and walltime between Multi-scale-Global (MG, \cref{alg:MG}) and Single-scale-Local (SL, \cref{alg:SL}) approaches across different noise levels and temporal resolutions. See \cref{fig:logistic_growth_MGSL} for more details.}
\label{fig:lorenz_MGSL}
\end{figure}
\begin{figure}[htbp]
\centering
    \includegraphics[width=\textwidth]{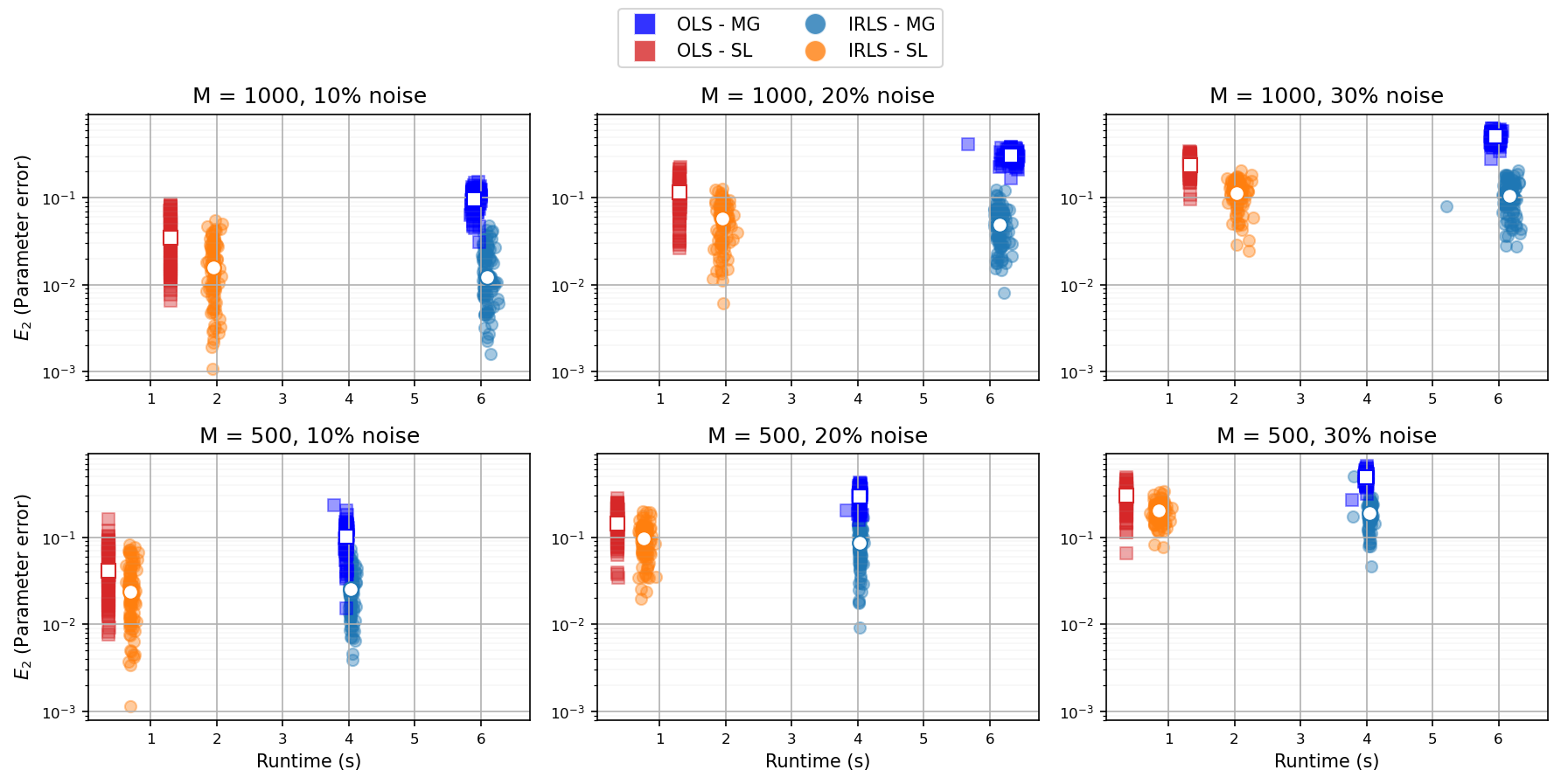}
  \caption{FitzHugh-Nagumo: Comparison of parameter error and walltime between Multi-scale-Global (MG, \cref{alg:MG}) and Single-scale-Local (SL, \cref{alg:SL}) approaches across different noise levels and temporal resolutions. See \cref{fig:logistic_growth_MGSL} for more details.}
\label{fig:fitzhugh_nagumo_MGSL}
\end{figure}

Across all four systems and both regression methods, the Single-scale-Local (SL, \cref{alg:SL}) approach is consistently faster than the Multi-scale-Global (MG, \cref{alg:MG}). The difference is especially pronounced for the logistic growth, Duffing, and FitzHugh–Nagumo systems (\cref{fig:logistic_growth_MGSL,fig:duffing_MGSL,fig:fitzhugh_nagumo_MGSL}) where IRLS-SL achieves roughly $60–70\%$ improvement in median runtime over IRLS-MG. The improved efficiency of Single-scale-Local can be attributed to two main factors. First, selecting the critical radius $\rchat$ is inherently faster than determining $\rmin$, even though both rely on a grid search. Second, the Single-scale-Local approach avoids the overhead of computing an orthonormal basis for the test function matrix, which requires an SVD step in the Multi-scale-Global method.

However, the MG approach can sometimes offer runtime advantages, as seen in the Lorenz system under WENDy-IRLS(\cref{fig:logistic_growth_MGSL}). In this case, the iterative nature of IRLS leads to more iterations at higher noise levels. The orthonormalized test function matrix used in MG reduces numerical conditioning issues and allows faster convergence, which offsets the initial SVD overhead. To further improve the runtime of the SL scheme in such cases, one could consider strategies such as subsampling the test functions or increasing the spacing (gap) between test function centers. However, these modifications are beyond the scope of this work and are left for future investigation.

In terms of parameter error, we observe that for WENDy-OLS, the Single-scale-Local (SL) approach consistently outperforms the Multi-scale-Global (MG) method across all cases tested. However, under WENDy-IRLS, the SL approach shows clear advantages only for the logistic growth and Duffing systems, while performance is nearly identical to MG for the Lorenz and FitzHugh–Nagumo models. This result is consistent with observations from \cref{fig:logistic_growth_rad_p,fig:duffing_rad_p,fig:lorenz_rad_p}, where the critical radius $\rchat$ and the minimum radius $\rmin$, as well as multiples of $\rmin$, fall within regions of low error. As a result, both SL and MG achieve near-optimal performance in those cases.

It is possible that the use of orthonormal test functions in MG contributes to improved convergence in IRLS, especially under high noise conditions. However, further investigation is needed to fully understand this effect, which we leave for future work.

\section{Conclusions}
\label{sec:conclusions}
Scientific Machine Learning (SciML) is a framework for blending well-established computational science methodologies with modern machine learning techniques. The recently developed weak form version (WSciML) leverages a weighted integral transform to yield highly robust algorithms. In particular, the Weak form Sparse Identification of Nonlinear Dynamics (WSINDy) for equation learning and Weak form Estimation of Nonlinear dynamics (WENDy) for parameter estimation both exhibit strong robustness. However, how to choose the test function (i.e., the weight function) in a data-driven manner has been an open question for some time. This paper is the first dedicated solely to answering this question.

In this work, we proposed a strategy for selecting the radius of Single-scale-Local (SL) test function set within the WENDy framework, with the goal of optimizing parameter recovery. Central to our approach is \cref{conj:optimal_radius}, which states that the optimal radius for minimizing expected parameter error aligns with the changepoint in the weak-form integration error. Motivated by this conjecture, we perform a grid search over candidate radii, guided by a surrogate approximation of the weak-form integration error,
$$\Gbfstar\wstar - \bbf^{\star} = \Phibf\Thetabf(\ubf^\star)\wstar + \Phidotbf \ubf^\star, $$
without having access to either the true parameter $\wstar$ or noise-free data $\ubf^\star$. This approximation arises from an asymptotic expansion of the error introduced by discretizing the weak-form integrals. Specifically, for the piecewise polynomial test function, we provide detailed guidance on selecting both the radius $r$ and the order $p$. Our method consistently identifies regions of low parameter error across a wide range of systems, time resolutions, and regression schemes. Compared to the previous Multi-scale-Global (MG) approach, our method achieves significantly faster runtimes while maintaining comparable or superior accuracy, especially under high-noise regimes.

While \cref{conj:optimal_radius} is supported empirically, a rigorous mathematical or statistical justification remains open. Proving this conjecture, perhaps in the limit of continuum data, would substantially strengthen the theoretical foundations of our method.

Although our focus has been on radius selection, several important directions remain unexplored. We did not consider optimizing other aspects, such as the overlap between neighboring test functions or the placement of their centers. A rigorous treatment of these factors presents an interesting direction for future work. In addition, our proposed approximated integration error depends on several technical parameters, such as the truncation order in the Euler–Maclaurin expansion, the number of retained Fourier modes, and the finite difference order. While these parameters were selected empirically in this work, a better understanding of their influence, along with strategies for joint optimization, would provide more robust guidelines for our method.

\section*{Acknowledgments}
This research was supported by a US DOE Mathematical Multifaceted Integrated Capability Center (MMICC) grant to the University of Colorado (DE-SC0023346). This work also
utilized the Blanca condo computing resource at the University of Colorado Boulder. Blanca is jointly funded by computing users and the University of Colorado Boulder.

The authors would also like to thank D.~Messenger (Los Alamos National Lab) for comments on an earlier draft of this manuscript and V.~Dukic (University of Colorado) for insightful discussions concerning statistical concepts.

\bibliographystyle{siamplain}
\bibliography{Tran02TestFunctions}
\appendix
\section{Weak form Parameter Estimation with Iteratively Re-weighted Least Squares}
\label{sec:WENDy}
Here we provide the details of the WENDy-IRLS algorithm as described in \cite{BortzMessengerDukic2023BullMathBiol}.

To begin, we redefine the regression matrix $\Gbf$ and define the response vector $\bbf$ to incorporate all $d$ dimensions of the observed data $\Ubf$
\begin{equation*}
    \begin{aligned}       \Gbf := [\mathbb{I}_{d}\otimes\Phibf\Thetabf] \in \Rbb^{Kd \times Jd}, \quad\bbf :=-\mathsf{vec}(\Phidotbf\Ubf) \in \Rbb^{Kd},\quad\wbf := \mathsf{vec}(\Wbf) \in \Rbb^{Jd},
    \end{aligned}
\end{equation*}
where $\otimes$ denotes the Kronecker product. The residual for the one-dimensional case is defined in \eqref{eq:e_int}; we restate it here in the full  $d$-dimensional setting:
\begin{equation*}
    \ebf_{\text{int}}(\ubf^{\star}) := \Gbfstar \wbf^{\star} - \bbf^{\star} = \Phibf \Thetabfstar \wbf^{\star} + \Phidotbf \ubf^{\star}  \in \Rbb^{Kd}.
\end{equation*}
A Taylor expansion of  $\ebf_\Theta-\bbf^{\pmb{\varepsilon}}$ about the data $\Ubf$ yields
\begin{equation*}
\ebf_\Theta-\bbf^{\pmb{\varepsilon}} = \Lbf_{\wbf}\mathsf{vec}(\pmb{\varepsilon})+\hbf(\Ubf,\wbf,\pmb{\varepsilon}),
\end{equation*}
where $\Lbf_{\wbf}$ is the leading-order linear operator and $\hbf$ contains higher-order terms in $\pmb{\varepsilon}$. Specifically,
\begin{equation*}
    \mathbf{L}_{\wbf} :=[\mathsf{mat}(\wbf)^{T}\otimes\Phi]\nabla\Theta\Kbf+[\mathbb{I}_{d}\otimes\dot{\Phi}],
\end{equation*}
with $\Kbf$ the commutation matrix satisfying $\Kbf \mathsf{vec}(\pmb{\varepsilon}) = \mathsf{vec}(\pmb{\varepsilon}^T)$, and $\nabla \Theta$ the Jacobian of the feature map $\Theta$ with respect to $\Ubf$.

Assuming Gaussian noise and negligible integration error, the residual at the true weights $\wbf^\star$ follows the distribution 
\begin{equation*} 
\Gbf \wbf^\star - \bbf \sim \mathcal{N}\left(\mathbf{0}, \sigma^2 \Lbf_{\wbf^\star} \Lbf_{\wbf^\star}^T\right),
\end{equation*}
This motivates an iterative algorithm in which, at each step $n$ the covariance $\Cbf^{(n)}$  is re-estimated using the current parameter iterate $\wbf^{(n)}$ for the regularization parameter $\alpha$: 
\begin{equation*}
    \begin{aligned}
        \Lbf^{(n)} &\leftarrow [\textsf{mat}(\wbf^{(n)})^{T}\otimes\Phi]\nabla\Theta(\Ubf)\Kbf+[\mathbb{I}_{d}\otimes\dot{\Phi}],\\
        \Cbf^{(n)} &= (1-\alpha)\Lbf^{(n)}(\Lbf^{(n)})^T + \alpha \Ibf,\\
        \wbf^{(n+1)} &\leftarrow (\Gbf^T(\Cbf^{(n)})^{-1}\Gbf)^{-1}\Gbf^{T}(\Cbf^{(n)})^{-1}\bbf.
    \end{aligned}
\end{equation*}
We initialize the iteration with the OLS estimate, setting $\wbf^{(0)} = \wbf_0: = \mathsf{vec}(\Wbf_0)$. A detailed description of the WENDy-IRLS algorithm, along with its strengths and limitations, can be found in \cite{BortzMessengerDukic2023BullMathBiol}.

\section{Changepoint Detection via Piecewise Linear Approximation}
\label{sec:changepoint}
In this section, we describe the changepoint detection method used for a decreasing function $y(x)$ defined on a uniform grid $\{x_0, x_1, \dots, x_M\}$.
The procedure, originally introduced in \cite{MessengerBortz2021JComputPhys}, identifies the point where the function transitions from decay to stagnation by selecting the breakpoint that yields the best two-segment linear approximation to $y$. For completeness, we include the full description and implementation in \cref{alg:changepoint}.

\begin{algorithm}
\caption{Changepoint Detection via Piecewise Linear Fit}
\label{alg:changepoint}
\begin{algorithmic}
\STATE \textbf{Input:} Function $y(x)$ defined over the uniform grid $\{x_m\}_{m=0}^M$, with corresponding values $\{y_m\}_{m=0}^M$
\STATE \textbf{Output:} Changepoint $x_c$ of $y(x)$
\STATE For each grid length $k = 1, 2\dots, M-1$:
\begin{itemize}
    \item Compute line segment from  $(x_0, y_0)$ to $(x_k, y_k)$ 
    $$L_1^k(x):= \frac{y_k - y_0}{x_k-x_0}(x - x_0) + y_0$$
    \item Compute line segment from $(x_k, y(x_k))$ to $(x_M, y(x_M))$
    $$L_2^k(x):= \frac{y_M - y_k}{x_M-x_k}(x - x_k) + y_k$$
    \item Compute error 
$$\Ebf(k) = \sqrt{\sum_{m=0}^k \parens{\frac{ L_1^k(x_m) - y_m}{y_m}}^2
    + \sum_{m=k}^M \parens{\frac{ L_2(x_m) - y(x_m)}{y(x_m)}}^2}. $$
\end{itemize}
Identify the changepoint  $x_c:=  \arg\min_{k} \Ebf(k)$
\RETURN $x_c$
\end{algorithmic}
\end{algorithm}

In this work, \cref{alg:changepoint} is used to compute the changepoint $r_c $ of $\log(\norm{\ebfint(r)}$, as well as the \emph{critical radius} $\widehat{r}_c$ from $\log(\norm{\ebfhatint(r)}$. While many methods exist for identifying such changepoints in the literature, we adopt this simple and interpretable two-line approximation approach in our work for its practical effectiveness. Developing or selecting an optimal changepoint detection algorithm tailored to weak-form integration error curves is left for future work.

\section{Reference Test Function $\psi(t)$}
\label{sec:reference_tf}
We define our set of test functions $\Kcalbf$
by translating a reference test function $\psi(t)$. For the interested reader, in this section, we present several of our previously-used test functions.
In \cite{MessengerBortz2021MultiscaleModelSimul}, we considered a piecewise polynomial reference test function $\psi(t;r,p)$ of the form
\begin{equation}
  \psi(t;r, p) =
\begin{cases}
     C(r-t)^p(r+t)^p \quad & t \in [-r, r],\\
     \quad  \quad 0 \quad & \text{otherwise},
     \label{eq:tf_poly}
\end{cases}
\end{equation} 
where $C$ is chosen so that $\norm{\psi}_{\infty} = 1$. In the same work, we demonstrated that employing higher-order polynomial test functions can significantly improve accuracy, up to machine precision (see \cref{sec:poly_tf_order}). Motivated by this finding, and to avoid tuning $p$, in \cite{BortzMessengerDukic2023BullMathBiol}, we propose the use of a smooth $C^\infty$ bump function
\begin{equation}
\psi(t;r, \eta) = C\exp\left(-\frac{\eta}{[1-(t/r)^2]_+}\right),
\label{eq:tf_CinftyBump}
\end{equation}
where $C$ ensures $\norm{\psi}_2 = 1$, $\eta$ controls the sharpness of the bump, and $[\cdot]+ := \max(\cdot, 0)$ restricts the support to $[-r, r]$. 

In our earlier efforts, we proposed different parameter selection strategies for each test function family. For the piecewise polynomials in \cite{MessengerBortz2021JComputPhys}, we suggested choosing the radius $r$ and order $p$ by treating these test functions as low-pass filters: examining the spectrum of the observed data $\Ubf$, identifying a critical wavenumber beyond which noise dominates, and selecting parameters so that the Fourier transform of $\psi$ decays rapidly past that threshold. For the bump functions used in  \cite{BortzMessengerDukic2023BullMathBiol}, we found that estimation accuracy is largely insensitive to the shape parameter $\eta$, allowing it to be arbitrarily fixed (e.g., $\eta = 9$). However, the performance when using \eqref{eq:tf_CinftyBump} proved unreliable and this we extended it to create a Multi-scale-Global (MG) orthonormal set, using an integration error analysis and an SVD (and summarized in \cref{sec:MinimumRadiusSelection}).

\section{Proof of \cref{lem:a}}
\label{sec:proof_lemma}
\begin{proof}
Consider the integration error $\ebf_{\text{int}}(\phi, \ustar)$
\begin{equation}
    \begin{aligned}
        \ebf_{\text{int}}(\phi, \ustar) &= \dt \sum_{m=0}^{M}{}'' \phi(t_m)\dot{u}^{\star}(t_m) + \phidot(t_m)\ustar(t_m), 
    \end{aligned}
    \label{eq:eint_u}
\end{equation}
where the double prime notation on the sum $\sum{}''$ denotes the trapezoidal-weighted sum, i.e., the first and last terms are halved before summing. Expressing $\phi(t)$ and $\phidot(t)$ in terms of their Fourier series,\footnote{We define the Fourier series expansion of a T-periodic function $f \in L^2(T)$ as $f(t) := \frac{1}{\sqrt{T}}\sum_{n \in \Zbb} \widehat{f}_ne^{\frac{2 \pi int}{T}},
\label{eq:def_FourierSeries}$ where $\widehat{f}_n: = \frac{1}{\sqrt{T}}\int_0^T f(t)e^{-\frac{2 \pi int}{T}}dt$.} we have that
\begin{equation}
\label{eq:eint_expand}
    \begin{aligned}
         \ebf_{\text{int}}(\phi, \ustar) & = \frac{\dt}{\sqrt{T}} \sum_{m=0}^{M}{}'' \parens{ \dot{u}^{\star}(t_m) \sum_{n \in \Zbb} \widehat{\phi}_n e^{\frac{2 \pi inm}{M}} }
         + \parens{ \ustar(t_m) \sum_{n \in \Zbb} \frac{2\pi i n}{T }\widehat{\phi}_n e^{\frac{2 \pi inm}{M}} } \\
         & = \frac{1}{\sqrt{T}} \sum_{n \in\Zbb}\widehat{\phi}_n \cdot \underbrace{\dt  \sum_{m=0}^{M}{}'' \frac{d}{dt} \brackets{\ustar e^{\frac{2 \pi int}{T}} }(t_m)}_{I_n}.
    \end{aligned}
\end{equation}
The asymptotic expansion of $I_n$ is derived by applying the Euler - Maclaurin formula \cite{DahlquistBjorck2008, Atkinson1989} to the integral $\int_0^T\frac{d}{dt} \brackets{\ustar e^{\frac{2 \pi int}{T}}}dt$. We recall the Euler-Maclaurin formula for the truncation error of the trapezoidal rule applied to the integral of a smooth function $g: [a, b] \rightarrow \mathbb{C}$ :
\begin{equation}
\begin{aligned}
     \dt \sum_{m = 0}^{M}{}'' g(t_m)  & \sim  \int_a^b g(t)dt   +  \sum_{s=1}^{\infty} \dt^{2s} \frac{B_{2s}}{(2s)!} \parens{ g^{(2s-1)}(b) - g^{(2s-1)}(a)},
\end{aligned}
\label{eq:EulerMaclaurin}
\end{equation}
where $B_{2k}$ are the Bernoulli numbers. Let $g(t)= \frac{d}{dt} \brackets{\ustar e^{\frac{2 \pi int}{T}}}$, it follows that $\int_0^T g(t)dt = \ustar(T) - \ustar(0)$. In addition, the $(2s-1)$-th derivative of $g(t)$ is given by
\begin{equation*}
    g^{(2s-1)}(t) = \parens{\ustar e^{\frac{2 \pi int}{T}}}^{(2s)} = \sum_{l = 0}^{2s} \binom{2s}{l} 
    \parens{\frac{2\pi in}{T} }^{2s-l}u^{\star(l)}(t)e^{\frac{2 \pi int}{T}},
\end{equation*}
and therefore $I_n$ admits the following approximation:
\begin{equation*}
    \begin{aligned}
        I_n & \approx  \ustar \big|_{0}^{T} + 
       \sum_{s=1}^{\infty} \dt^{2s} \frac{B_{2s}}{(2s)!} 
         \parens{ \sum_{l = 0}^{2s} \binom{2s}{l} 
         \parens{ \frac{2\pi in}{T} }^{2s-l}u^{\star(l)}\big|_{0}^{T}
        }.
    \end{aligned}
\end{equation*}
~
\end{proof}

\section{Multiscale Orthonormal Test Functions}
\label{sec:MinimumRadiusSelection}
In addition to introducing the $C^{\infty}$ bump function defined in \eqref{eq:tf_CinftyBump}, the work in  \cite{BortzMessengerDukic2023BullMathBiol} outlines a two-stage procedure for constructing matrices of multiscale globally supported orthonormal test functions, which we refer to as the \emph{Multiscale Global (MG)} approach. The first step involves identifying a minimum test function radius $r_{\text{min}}$ such that the integration error, as defined in \eqref{eq:e_int}, remains negligible compared to the noise-induced errors. 
For a $1$-dimensional $\ustar$, consider the $k$-th element of the integration error as defined in \eqref{eq:e_int}. Applying the product rule yields
\begin{equation}
\begin{aligned}
    \ebf_{\text{int}}(\phi_k, \ustar) = (\Gbfstar \wbf^{\star} - \bbf^{\star})_k &= \dt \sum_{m=0}^{M-1} \phi_k(t_m)\dot{u}^{\star}(t_m) + \phidot_k(t_m)\ustar(t_m)\\
    &= \frac{T}{M} \sum_{m=0}^{M-1}\frac{d}{dt}(\phi_k u^{\star})(t_m).
    \end{aligned}
    \label{eq:eint_k_prod}
\end{equation}
Rather than expressing $\phi_k$ in terms of its Fourier series as shown in \cref{lem:a}, the derivative $\frac{d}{dt}(\phi_k u^{\star})(t)$ is 
\begin{equation*}
\begin{aligned}
\frac{d}{dt}(\phi_k u^{\star})(t)
&= \frac{1}{\sqrt{T}}\sum_{n \in \Zbb} \frac{2 \pi i n}{T} \widehat{(\phi_k u^{\star})}_n e^{\frac{2 \pi i nt}{T}}.
\end{aligned}
\end{equation*}
Substituting this back into \eqref{eq:eint_k_prod} gives
\begin{equation*}
\begin{aligned}
        \ebf_{\text{int}}(\phi_k, \ustar, M) &= \frac{T}{M \sqrt{T}} \sum_{n \in \Zbb} \frac{2 \pi i n}{T} \widehat{(\phi_k u^{\star})}_n  \sum_{m=0}^{M-1} e^{\frac{2 \pi i n t_m}{T}}\\
        & =  \frac{2 \pi i }{\sqrt{T}} \sum_{n \in \Zbb} n  \widehat{(\phi_k u^{\star})}_{nM}.
\end{aligned}
\end{equation*}
This shows that the integration error depends solely on the $M, 2M, \dots, $ Fourier modes of $\phi_k u^{\star}$. Since the data is sampled at $M$ discrete time points, the number of available Fourier modes is limited to $ \lfloor \frac{M}{2}\rfloor$.

To estimate the integration error from discrete data, a subsampled grid is introduced, with spacing $\widetilde{\Delta t} = T / \lfloor M/s\rfloor$, where  $s>2$ is a coarsening factor. The approximate integration error over this coarse grid is
\begin{equation*}
    \widehat{\ebf}_\text{int}(u^\star,\phi_k,\lfloor M/s\rfloor,s) := \frac{2\pi i}{\sqrt{T}}\sum_{n=-\flr{s/2}}^{\flr{s/2}}n  \widehat{(\phi_k u^{\star})}_{n\lfloor M/s\rfloor}.
\end{equation*}
Since the Fourier coefficients decay as $M$ increases, we have
\begin{equation*}
    \widehat{\ebf}_\text{int}(u^\star,\phi_k,\lfloor M/s\rfloor,s) \approx \ebf_{\text{int}}(\phi_k, \ustar, \lfloor M/s\rfloor).
\end{equation*}
For coarsening factors between 2 and 4, the sum above reduces to just the $\pm  \lfloor M/s\rfloor$ modes
\begin{equation*}
\begin{aligned}
   \widehat{\ebf}_\text{int}(u^\star,\phi_k,\lfloor M/s\rfloor,s) &=   \frac{2\pi i}{\sqrt{T}} \big(
    - \widehat{(\phi_k u^{\star})}_{-\lfloor M/s\rfloor} +  \widehat{(\phi_k u^{\star})}_{\lfloor M/s\rfloor}
   \big) \\
   & = -\frac{4 \pi }{\sqrt{T}} \text{Im}\{ \widehat{(\phi_k u^{\star})}_{\lfloor M/s\rfloor}\},
\end{aligned}
\end{equation*}
where  $\text{Im}\{z\}$ denotes the imaginary part of a complex number $z$. 

Since access to the true noiseless trajectory $\ustar$ is unavailable,the integration error is computed using the noisy measurement $U$. For multivariate systems, this error is computed independently for each component. The minimum radius $\rmin$ is then selected as the changepoint of  the log of the root-mean-square (RMS) integration error
\begin{equation}
    \hat{\ebf}_\text{rms}(r):= \bigg(K^{-1}\sum_{k=1}^K\sum_{i=1}^{d}\widehat{\ebf}_\text{int}(\Ubf^{(i)},\phi_k(\cdot;r),\lfloor M/s\rfloor,s)^2 \bigg)^{\frac{1}{2}},
    \label{eq:IntErr}
\end{equation}
where $\Ubf^{(i)}$ denotes the $i$th variable. 

Once the minimal radius $r_{\text{min}}$ is determined, we construct the test function matrices $(\Phi,\dot{\Phi})$ by assembling and orthonormalizing test functions over multiple radii. Specifically, we define a set of increasing radii: 
 $\rbf:= \rmin \times(1,2,4,8)$. For each radius in 
$\rbf$, we build the corresponding test function matrices and vertically stack them:
\begin{equation*}
    \Phibf_{full} := \begin{bmatrix} 
    \Phibf_0^T   &
    \Phibf_1^T  &
    \Phibf_2^T &
    \Phibf_3^T \end{bmatrix}^T, 
    \quad   
    \Phidotbf_{full} := \begin{bmatrix} 
    \Phidotbf_0^T & 
    \Phidotbf_1^T & 
    \Phidotbf_2^T & 
    \Phidotbf_3^T 
    \end{bmatrix}^T.
\end{equation*}
Applying singular value decomposition to $\Phibf_{full}$ yields
$$\Phibf_{full} = \Qbf \mathbf{\Sigma} \Vbf^T.$$
Letting $k$ denote the truncation index (e.g., the SVD ``corner''), the final orthonormal test function matrices are computed by rescaling the projection onto the top $k$ left singular vectors
\begin{equation*}
    \begin{aligned}
      \Phibf = \text{diag}\Big(\frac{1}{\sigma_1}, \dots, \frac{1}{\sigma_k} \Big) \Qbf^T \Phibf_{full}, 
      \quad 
       \Phidotbf = \text{diag}\Big(\frac{1}{\sigma_1}, \dots, \frac{1}{\sigma_k} \Big) \Qbf^T \Phidotbf_{full} .
    \end{aligned}
\end{equation*}
The resulting matrices form an orthonormal basis of test functions that capture information across multiple scales \cite{RummelMessengerBeckerEtAl2025arXiv250208881}. To conclude this section, we summarize the full procedure as the \emph{Multi-scale-global (MG)} approach in \cref{alg:MG}. A detailed exposition of this method is provided in \cite{BortzMessengerDukic2023BullMathBiol}.

\begin{algorithm}
\caption{Multiscale Global Approach (MG) (see \cref{sec:MinimumRadiusSelection})}
\label{alg:MG}
\begin{algorithmic}
\STATE \textbf{Input:} Data $\Ubf$, coarsening factor $s$
\STATE \textbf{Output:} Test function matrices $\Phibf$, $\Phidotbf$
\vspace{0.5em}
\STATE \textit{/* Compute minimum radius $\rmin$ */}
\STATE For each candidate radius $r = m\dt$, with $m = 2, \cdots, \lfloor M/2 \rfloor$:
\begin{itemize}
    \item Construct a candidate basis of test functions $\{\phi_k(r)\}_{k=1}^K$
    \item For each component of the data $\Ubf_i$, compute  $\widehat{\ebf}_\text{int}^{(i,k)}(r)$ using:
    $$
    \widehat{\ebf}_\text{int}^{(i,k)}(r) := -\frac{4 \pi}{\sqrt{T}} \, \mathrm{Im}\left\{ \widehat{(\phi_k(r) \Ubf_i)}_{\lfloor M/s\rfloor} \right\}.
    $$
    \item Compute the root mean square of $\widehat{\ebf}_\text{int}^{(i,k)}(r)$ across all variables and test functions:
    $$
    \widehat{\ebf}_{\text{rms}}(r) := \sqrt{ \frac{1}{K} \sum_{i=1}^{d} \sum_{k=1}^{K} \left( \widehat{\ebf}_\text{int}^{(i,k)}(r) \right)^2 }.
    $$
\end{itemize}
\STATE Identify $\rmin := \text{changepoint}\parens{\log(\widehat{\ebf}_{\text{rms}}(r))}$ using \cref{alg:changepoint}.
\vspace{0.5em}
\STATE \textit{/* Compute Multiscale Global Basis */}
\STATE Construct test function matrices:
$$\Phibf_{\text{full}} :=\begin{bmatrix}\Phibf^T(\rmin) &\Phibf^T(2\rmin) & \Phibf^T(4\rmin) & \Phibf^T(8\rmin)
\end{bmatrix}^T, $$ 
$$\Phidotbf_{\text{full}} := 
\begin{bmatrix}
\Phidotbf^T(\rmin) & \Phidotbf^T(2\rmin) & \Phidotbf^T(4\rmin) & \Phidotbf^T(8\rmin)
\end{bmatrix}^T. $$
\STATE Perform SVD: $\Phibf_{\text{full}} = \Qbf \boldsymbol{\Sigma} \Vbf^T$.
\STATE Select truncation index $k$, construct: 
$\Pbf: = \text{diag}\left(\sigma_1^{-1}, \dots, \sigma_k^{-1} \right) \Qbf^T$.
\STATE Compute $\Phibf := \Pbf \Qbf^T \Phibf_{\text{full}} \quad \Phidotbf := \Pbf \Qbf^T \Phidotbf_{\text{full}}$
\RETURN $\Phibf, \Phidotbf$.
\end{algorithmic}
\end{algorithm}

\section{Proof of \cref{prop:eint_hat}}
\label{sec:proof_prop_ehat}
\begin{proof}
The expansion in \cref{lem:a} characterizes the integration error for a single test function $\phi$, and thus can be applied to each $\phi_k$ to produce an expression for each component of the vector $\ebfint$. From \cref{def:tf_set}, we have $\phi_k(t) = \psi(t - t_{m_k})$, so the Fourier coefficients of $\phi_k$ satisfy $\widehat{\phi_k}_n = e^{-\frac{2\pi inm_k}{M}}\widehat{\psi}_n$. Therefore, the $k$-th component of $\ebfint$ is
\begin{equation*}
       (\ebf_{\text{int}})_k := \ebfint(\phi_k, \ustar) = \frac{1}{\sqrt{T}} \sum_{n\in \Zbb} \widehat{\phi_k}_n I_n = \frac{1}{\sqrt{T}} \sum_{n \in \mathbb{Z}} e^{-\frac{2\pi inm_k}{M}} \widehat{\psi}_n I_n.
    \end{equation*}
We truncate this infinite sum to retain only the $M$ Fourier modes in the set of frequencies $\nbf$ defined in \eqref{eq:freq}. For a discussion of truncating beyond $M$ modes, see \cref{sec:Mtilde}. The resulting approximation of the integration error for $\phi_k$ is
\begin{equation*}
    \ebfhatint(\phi_k, \ustar) = \frac{1}{\sqrt{T}} \sum_{n \in \nbf} e^{-\frac{2\pi inm_k}{M} }\widehat{\psi}_n I_n, 
\end{equation*}
Note that the vector $\left( e^{-\frac{2\pi i n m_k}{M}} \right)_{n \in \nbf}$ corresponds to the $m_k$-th row of the DFT matrix $\Fbf$.

The approximation of each integral $I_n$ is computed  by truncating the Euler– Maclaurin expansion from \cref{lem:a}, with truncation order $S$ and finite difference order $\boldsymbol{\mu}$: 
\begin{equation*} 
    I_n(\ustar, S, \boldsymbol{\mu}) := \ustar|_0^T + \sum_{s=1}^{S} \dt^{2s} \frac{B_{2s}}{(2s)!} \parens{ \sum_{l = 0}^{2s} \binom{2s}{l} \parens{\frac{2 \pi i n}{T}}^{2s-l} \mathcal{D}^{l}_{\mu_l}[\ustar]|_0^T }.
\end{equation*}
Concatenating these approximations over all $n \in \nbf$ gives the vector $\Ibf(\ustar, S, \boldsymbol{\mu})$. Finally, stacking $\ebfhatint(\phi_k, \ustar)$ over $ k = 1, \dots, K$ yields \eqref{eq:eint_hat}, completing the proof.
\end{proof}

\section{Proof of \cref{prop:psi_hat}} 
\label{sec:psi_hat}
\begin{proof}
The reference test function is given by: 
$$\psi(t;r) = 
\begin{cases}
     C(r^2 - t^2)^p \quad & t \in [-r, r]\\
     \quad  \quad 0 \quad & \text{otherwise}
\end{cases}
$$
Since  $\psi(t;r)$ is an even function, its Fourier coefficients satisfies $\widehat{\psi}_n = \widehat{\psi}_{-n}$. For the case $n = 0$, a simple change of variable $x = \frac{t}{r}$ yields
\begin{equation*}
    \begin{aligned}
    \widehat{\psi}_0 & = \frac{2C}{\sqrt{T}} \int_{0}^{r} (r^2 - t^2)^p dt = \frac{2C}{\sqrt{T}} r^{2p+1} \int_{0}^{1}(1-x^2)^p dx \\
    & = \frac{2C}{\sqrt{T}} r^{2p+1} \sum_{j=0}^p \binom{p}{j}\frac{(-1)^j}{2j+1}.
    \end{aligned}
\end{equation*}
For $n \geq 1$, we have that
\begin{equation*}
    \begin{aligned}
    \widehat{\psi}_n &=   \frac{C}{\sqrt{T}} \int_{-r}^{r} (r^2 - t^2)^p e^{-\frac{2 \pi int}{T}}dt \\
    & = \frac{C}{\sqrt{T}} \int_{-r}^{r} (r^2 - t^2)^p cos\bigg(\frac{2\pi nt}{T} \bigg)dt 
    - \frac{iC}{\sqrt{T}} \int_{-r}^{r} (r^2 - t^2)^p sin\bigg(\frac{2\pi nt}{T} \bigg)dt \\
    & = \frac{C}{\sqrt{T}} \sqrt{\pi} \bigg(\frac{rT}{n\pi} \bigg)^{p+\frac{1}{2}} \Gamma(p+1) J_{p + \frac{1}{2}}\bigg( \frac{2\pi nr}{T}\bigg),
    \end{aligned}
\end{equation*}
because $\psi$ is even, which means the integral against the sin term vanishes. The remaining integral involving the cosine term follows from formula 3.771.8 in \cite{GradshteynRyzhik2015TableofIntegralsSeriesandProducts}
$$\int_0^u (u^2 - x^2)^{\nu - \frac{1}{2}}cos(ax)d =\frac{\sqrt{\pi}}{2} \Big(\frac{2u}{a} \Big)^{\nu}\Gamma\Big(\nu + \frac{1}{2}\Big)J_{\nu}(au),$$
where $\Gamma$ is the  Gamma function and $J_{\nu}$ is the Bessel function of the first kind.
\end{proof}
The normalization $C$ is given by $ C = \frac{1}{\norm{\psi}_{2}}$, where 
$$\norm{\psi}_2 = r^{2p} \sqrt{2r}\sqrt{\sum_{k=0}^{2p}\binom{2p}{k}\frac{(-1)^k}{2k+1}}.$$


\section{Choosing the truncation order $S$}
\label{sec:apdxS}
In this section, we provide a general guideline for selecting the truncation order $S$ in the asymptotic expansion of $I_n$ in Eq. \eqref{eq:I_approx}. Ideally, this choice depends on the smoothness of $u$. If such information is available, $S$ should be chosen accordingly to match the regularity of $u$. 

In the absence of explicit smoothness information, a practical strategy is to incrementally increase $S$ until the contribution of the $S$-th term becomes negligible relative to the cumulative contribution up to order $S-1$. Specifically, we select $S$ such that the relative difference satisfies
\begin{equation*}
    \frac{|I_n(S) - I_n(S-1)|}{ |I_n(S-1)|}  < \tau,
\end{equation*}
for some tolerance $\tau$ and $S > 1$. This translates to 
\begin{equation*}
    \begin{aligned}
    \frac{\bigg|\Delta t^{2S} \frac{B_{2S}}{(2S)!} 
          \Big( \sum_{l = 0}^{2S} \binom{2S}{l} 
         \big(\frac{2\pi in}{T} \big)^{2S-l}u^{(l)}\big|_{0}^{T}
         \Big)\bigg|} { \big| I_n(S-1) \big|} < \tau.
    \end{aligned}
\end{equation*}
In most practical scenarios, it is sufficient to estimate the dominant contributions without computing additional higher-order derivatives. For instance, we may approximate the truncation condition as
\begin{equation*}
    \begin{aligned}
      \frac{\bigg|\Delta t^{2S} \frac{B_{2S}}{(2S)!}  
         \big(\frac{2\pi in}{T} \big)^{2S}u\big|_{0}^{T}
         \bigg|} { \big| I_n(S-1) \big|} < \tau,
    \end{aligned}
\end{equation*}
which avoids the need to compute derivatives beyond order $2(S-1)$. In practice, we set the tolerance to $\tau = 0.1$, and in all numerical examples presented in this work, a truncation order of $S=1$ was found to be sufficient.

\section{Approximation of the integration error with oversampled Fourier modes}
\label{sec:Mtilde}
According to \cref{lem:a}, the integration error $\ebf_{\text{int}}(\phi_k, \ustar)$ can be expressed as an infinite sum, i.e., a linear combination of the Fourier coefficients of the reference test function $\psi$. The closed-form expression for these coefficients provided in \cref{prop:psi_hat} enables analytical computation of $\widehat{\psi}_n$, allowing us to construct a surrogate approximation $\widehat{\ebf}_{\text{int}}$ beyond the usual resolution of $M$ modes. Specifically, we extend this estimate to incorporate $\widetilde{M} > M$ Fourier modes, as defined in the modified version of \eqref{eq:eint_hat} as
\begin{equation}
 \label{eq:eq:eint_approx_tilde}
    \begin{aligned}
         \widehat{\ebf}_{\text{int}}(\ustar, r, S, \widetilde{M}, \boldsymbol{\mu}) := \Pbf \Fbf \Psihatbf(r, \widetilde{M}) \Ibf(\ustar, S, \widetilde{M}, \boldsymbol{\mu}) \in \mathbb{R}^K.
    \end{aligned}
\end{equation}
Here, $\Ibf(\ustar, S, \widetilde{M}, \boldsymbol{\mu}) \in \mathbb{C}^{\widetilde{M}}$ is the approximation of the integrals $I_n$ \eqref{eq:eint_expand},
defined analogous to \eqref{eq:I_approx} using $\widetilde{M}$ frequency modes \eqref{eq:freq}. Similarly, the diagonal matrix $\Psihatbf \in \mathcal{C}^{\widetilde{M} \times \widetilde{M}}$ contains the Fourier coefficients $\widehat{\psi}_n$ up to frequency $\widetilde{M}/2$. The DFT matrix $\Fbf \in \mathcal{C}^{M \times \widetilde{M}}$ becomes rectangular, with entries given by $\Fbf_{m, n}:= e^{-2 \pi inm/M}$. The projection matrix $\Pbf$ remains as defined in \eqref{eq:P_proj}, selecting the time indices corresponding to the test function centers. When $\widetilde{M} = \alpha M$ for some integer $\alpha > 1$, the matrix product $\Pbf \Fbf \Psihatbf \Ibf$ can be computed efficiently using the Fast Fourier Transform. 

\begin{figure}[htbp]
  \centering
  \begin{subfigure}[b]{0.49\textwidth}
    \includegraphics[width=\textwidth]{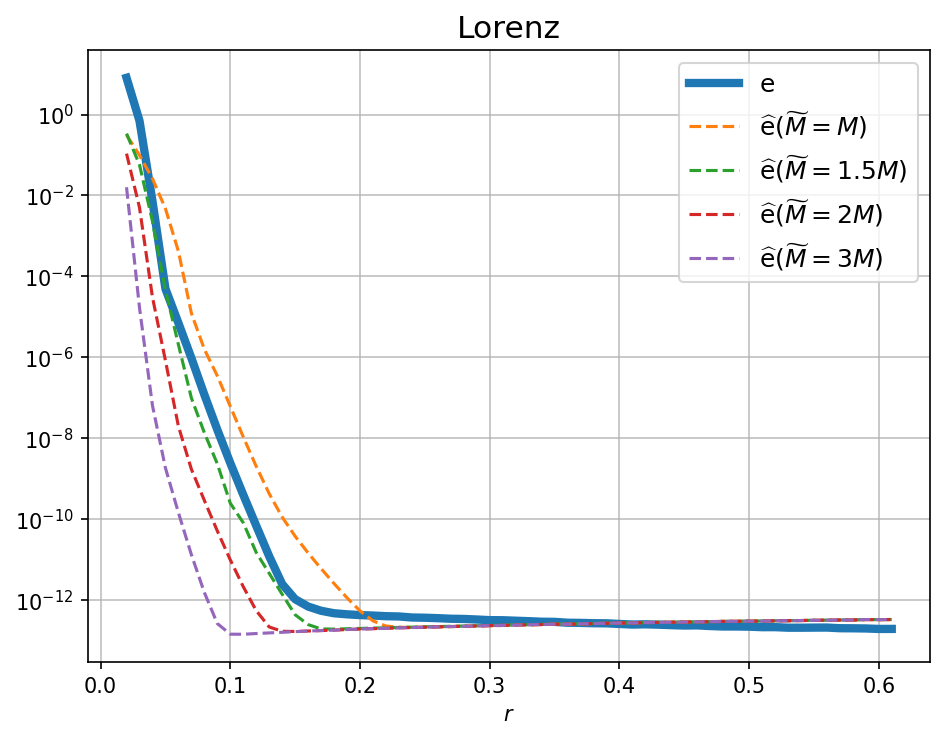}
  \end{subfigure}
  \hfill
  \begin{subfigure}[b]{0.49\textwidth}
    \includegraphics[width=\textwidth]{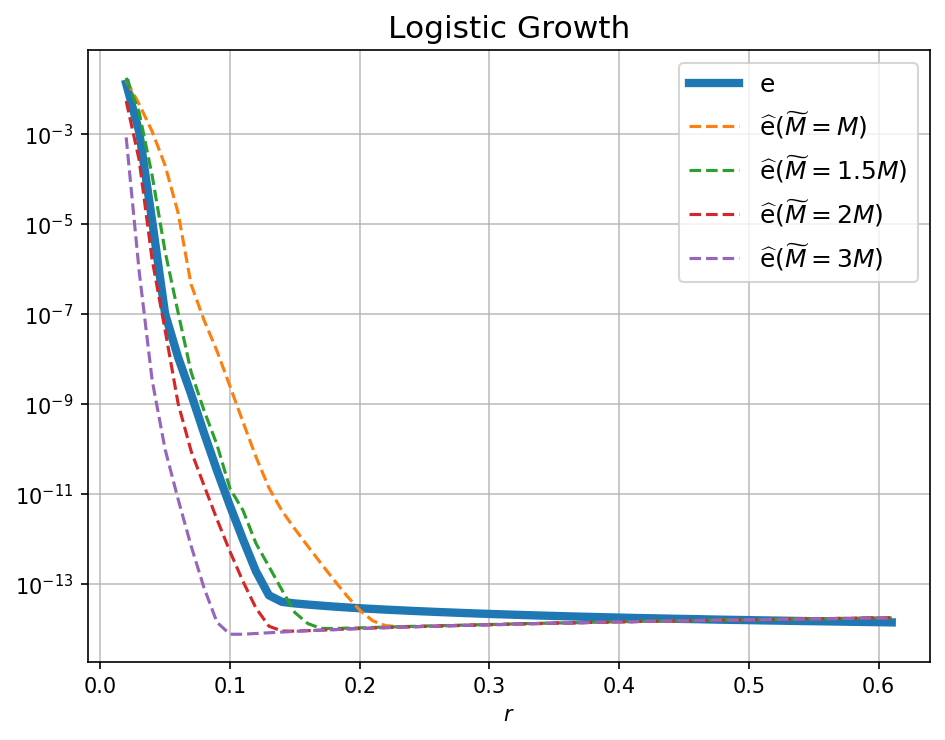}
  \end{subfigure}
  \caption{Comparison of the true integration error $\erm$ and the estimated error $\widehat{\erm}$ for the Lorenz system (left) and logistic growth model (right), plotted on a log scale. Results are shown for noise-free data with test function order $p = 16$. Estimates $\widehat{\erm}$ are computed with $\widetilde{M} \in {M, 1.5M, 2M, 3M}$. Solid blue lines represent the true integration error $\erm$. Notably, increasing $\Mtilde$ does not lead to a closer approximation of $\erm$.}
  \label{fig:eehat_Mtilde}
\end{figure}


\Cref{fig:eehat_Mtilde} illustrates that increasing $\widetilde{M}$ does not cause the estimated error $\widehat{\erm}$ to converge exactly to the true error $\erm$, even in the absence of noise. This discrepancy likely arises for two reasons: (1) the integral approximation $\Ibf$ is based on a truncated series, which cannot fully capture the exact integral without access to the true parameters $\wbf^\star$; and (2) as the radius $r$ increases, the high-frequency Fourier modes decay rapidly, and thus rounding errors may dominate.

As shown in \cref{fig:eehat_Mtilde}, varying $\widetilde{M}$ among $M$, $1.5M, 2M$, and $3M$ shows that while larger values of $\widetilde{M}$ may more closely track the decay of $\erm$, the change point detected by $\widehat{\erm}$ when $\widetilde{M} = M$ already falls within the low-error region. Based on this observation, and considering the computational advantages discussed in \cref{remark:FFT}, we fix $\widetilde{M} = M$ throughout the proposed algorithm. Exploring other values of $\widetilde{M}$ is left for future work.

\end{document}